\newtheorem{thm}{Theorem}[section]
\newtheorem{prop}[thm]{Proposition}
\newtheorem{cor}[thm]{Corollary}
\newtheorem{ass}[thm]{Assumption}
\newtheorem{lemma}[thm]{Lemma}
\newtheorem{defn}[thm]{Definition}
\newtheorem{preremark}[thm]{Remark}
\newenvironment{remark}{\begin{preremark}\rm}{\medskip \end{preremark}}
\numberwithin{equation}{section}
\newcommand{\one}{\mathds{1}}
\newcommand{\R}{\mathbb R}
\newcommand{\eps}{\varepsilon}
\newcommand{\grad} {\nabla}
\newcommand{\dd} {\; \mathrm{d}}
\newcommand{\dist} {\mathrm{dist}}
\DeclareMathOperator*{\osc}{osc}
\DeclareMathOperator{\dv}{div}
\DeclareMathOperator*{\esssup}{ess-sup}
\DeclareMathOperator*{\essinf}{ess-inf}
\newcommand{\hull}[1]{\text{Convex Env.}\left(#1\right)}
\title{Oscillation properties of scalar conservation laws}
\date{\today}
\author{Luis Silvestre}
\thanks{LS was partially supported by NSF grants DMS-1254332 and DMS-1362525.}
\begin{document}
\begin{abstract}
We obtain several new regularity results for solutions of scalar conservation laws satisfying the genuine nonlinearity condition. We prove that the solutions are continuous outside of the jump set, which is codimension one rectifiable. We show that the entropy dissipation vanishes away from the closure of the jump set. We prove that the solution decays algebraically in $L^\infty$ as $t \to \infty$ and we compute the presumably optimal decay rate. All these results are based on a local oscillation estimate which is obtained properly adapting some ideas of De Giorgi from the context of elliptic equations.
\end{abstract}

\maketitle

\section{Introduction}

In this work, we study entropy solutions to scalar conservation laws
\begin{equation}\label{e-conservationlaw}
 u_t + \dv A(u) = 0 \text{ in } (0,\infty) \times \R^d.
\end{equation}
Here $A: \R \to \R^d$ is a given function. We use the standard notation $a(v) = A'(v)$.

For some regularity considerations, it is convenient to study time independent conservation law equations of the form
\begin{equation} \label{e:conservatiolaw-no-t}
 \dv [ A(u) ] = a(u) \cdot \nabla u = 0.
\end{equation}
This formulation is not less general than \eqref{e-conservationlaw}, since we can consider $u$ as a function of $(t,x_1,\dots,x_d) \in \R^{d+1}$ with $\tilde a(t,x_1,\dots,x_d) = (1,a(t,x))$ so that a solution to \eqref{e-conservationlaw} is also a solution to \eqref{e:conservatiolaw-no-t} with $\tilde a$ instead of $a$. Conversely, a solution of an equation in the form \eqref{e:conservatiolaw-no-t} is obviously also a solution of an equation in the form \eqref{e-conservationlaw} which is constant in $t$.

Whenever possible, we will state our results in terms of the equation \eqref{e:conservatiolaw-no-t}. We do this only to make the formulas cleaner, since the equation is written with one fewer term.  The function $a$ in \eqref{e:conservatiolaw-no-t} will satisfy the usual \emph{genuine nonlinearity} condition.

\begin{ass} \label{a:genuine-nonlinearity}
The function $a$ is $C^1$ and there exists $\alpha \in (0,1]$ and $C>0$, so that for every $\xi \in \R^d$ with $|\xi|=1$, and any $\delta > 0$, we have
\[ | \{ v \in I : |a(v) \cdot \xi| < \delta \} | \leq C \delta^\alpha.\]
Here $I$ is a closed interval that contains the image of the function $u$.
\end{ass}

Under this assumption, it is well known that conservation laws enjoy striking regularization properties. This was first obtained in \cite{lions1994kinetic} using the kinetic formulation of conservation laws. 

Solutions to \eqref{e-conservationlaw} whose initial data belongs to $BV(\R^d)$, stay in $BV$ for positive time. In \cite{de2003structure}, the authors study non-$BV$ solutions and prove that they have a similar structure as $BV$ functions in the following sense.
\begin{itemize}
\item There is a jump set $J$ which is codimension one rectifiable.
\item The solution $u$ has vanishing mean oscillation at every point outside of $J$.
\item The function $u$ has left and right traces on $J$ in the sense that for almost all point $x_0 \in J$, blow up limits centered at $x_0$ converge in $L_{loc}^1$ to single shock solutions.
\end{itemize}

The jump set $J$ is defined explicitly in terms of the entropy dissipation measure. We recall its definition in \eqref{e:shock-set}.

In this paper we obtain several new regularity properties of conservation laws satisfying the genuine nonlinearity condition. The first of our main results tells us that the solution must be continuous outside of the jump set.

\begin{thm} \label{t:intro-continuity}
Let $u$ be an entropy solution of the equation \eqref{e:conservatiolaw-no-t} satisfying Assumption \ref{a:genuine-nonlinearity}. Then $u$ is continuous outside of the jump set $J$ (or, more properly, a.e. equal to a continuous function).
\end{thm}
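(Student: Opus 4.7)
The plan is to deduce Theorem~\ref{t:intro-continuity} as a soft corollary of the De~Giorgi--type local oscillation estimate advertised in the abstract. I expect that estimate to take approximately the form
\[
\osc_{B_{r/2}(x_0)} u \;\le\; \theta\,\osc_{B_r(x_0)} u \;+\; C\Bigl(\frac{\mu(B_r(x_0))}{r^{d-1}}\Bigr)^{\!\beta},
\]
valid for every ball $B_r(x_0)$, where $\mu$ is the entropy dissipation measure, $\theta \in (0,1)$ and $\beta>0$ are universal, and $C$ depends only on $\|u\|_{\infty}$ and on the constants of Assumption~\ref{a:genuine-nonlinearity}. The mechanism, encoded by genuine nonlinearity, is that a large oscillation on $B_r$ must produce a definite amount of entropy dissipation, so when $\mu(B_r(x_0))$ is small relative to $r^{d-1}$ the oscillation is forced to drop by a fixed factor.

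Granting such an estimate, the proof of Theorem~\ref{t:intro-continuity} is a standard iteration-of-oscillation argument. Fix $x_0 \notin J$. By the definition of $J$ via the entropy dissipation measure recalled in \eqref{e:shock-set}, the point $x_0$ lying outside $J$ is precisely the statement that the $(d-1)$-dimensional upper density of $\mu$ vanishes at $x_0$, i.e.\ $\mu(B_r(x_0))/r^{d-1} \to 0$ as $r \to 0^+$. Setting $\omega_k := \osc_{B_{2^{-k}r_0}(x_0)} u$ and $\eta_k := C\bigl(\mu(B_{2^{-k}r_0}(x_0))/(2^{-k}r_0)^{d-1}\bigr)^{\beta}$, the oscillation estimate yields the recurrence $\omega_{k+1} \le \theta\,\omega_k + \eta_k$ with $\eta_k \to 0$, which forces $\omega_k \to 0$. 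Hence the Lebesgue representative of $u$ has a limit at $x_0$, and $u$ admits a continuous representative there.

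To pass from pointwise continuity at every $x_0 \notin J$ to continuity of a representative on the open set $J^c$, I would verify that the decay $\mu(B_r(x))/r^{d-1} \to 0$ is locally uniform in $x$ on compact subsets of $J^c$. This is a standard covering argument using the finiteness of $\mu$ together with the upper semicontinuity of $x\mapsto \mu(\overline{B_r(x)})$; it propagates through the recursion above to give locally uniform decay of $\omega_k$, hence local continuity.

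The main obstacle is, of course, proving the oscillation estimate itself: one must run a De~Giorgi iteration for a first-order equation that lacks any coercive diffusive term, with genuine nonlinearity (in the spirit of the velocity-averaging technique of \cite{lions1994kinetic}) playing the role of coercivity. Once that central step is in place, the present theorem is a short corollary.
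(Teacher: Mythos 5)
Your argument is conditional on an estimate that you do not prove and that is not the one the paper establishes, and the gap is precisely where the real work lies. The paper's De~Giorgi theorem (Theorem~\ref{t:degiorgi}) is the \emph{first} De~Giorgi step only: an $L^1\to L^\infty$ bound $\|u\|_{L^\infty(B_1)}\le C\|u\|_{L^1(B_2)}^{\gamma}$ for nonnegative entropy subsolutions, obtained by iterating the velocity-averaging gain of integrability on the truncations $(u-\ell_k)_+$. The paper explicitly remarks that the \emph{second} De~Giorgi step --- a geometric improvement of oscillation, which is what your postulated inequality
\[
\osc_{B_{r/2}} u \le \theta \osc_{B_r} u + C\bigl(\mu(B_r)/r^{d-1}\bigr)^{\beta}
\]
amounts to --- does not hold for conservation laws. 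No quantitative implication ``small dissipation density $\Rightarrow$ oscillation drops by a fixed factor'' is proved anywhere in the paper, and your proposal gives no mechanism for it either; the averaging lemma yields integrability, not a dichotomy between oscillation decay and dissipation production. What actually bridges the gap between the definition of $J$ (via the density of $\mu$) and the decay of oscillation is an external input: the VMO property of \cite{de2003structure}, i.e.\ $r^{-d}\int_{B_r(x)}|u-u_r(x)|\,dy\to 0$ at every $x\notin J$, which comes from a blow-up/compactness/Liouville argument, not from an oscillation recursion. The paper then applies Theorem~\ref{t:degiorgi} to the subsolutions $(u-u_r(x))_+$ and $(u_r(x)-u)_-$ (Lemma~\ref{l:max-of-subsolutions}) to upgrade this averaged smallness to $\esssup_{B_{r/2}(x)}$ smallness, i.e.\ pointwise continuity. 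Without either your unproven dichotomy estimate or the VMO input, your recursion has no starting point.

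A second, smaller flaw: your final step assumes $J^c$ is open and that the pointwise vanishing $\mu(B_r(x))/r^{d-1}\to 0$ self-improves to locally uniform decay on compact subsets of $J^c$. Neither holds in general. The set $J$ is defined by a $\limsup$ condition and need not be closed (the paper stresses that $J$ may be strictly smaller than $\overline J$, and that the theorem applies even at accumulation points of $J$); and upper semicontinuity of $x\mapsto\mu(\overline{B_r(x)})$ does not prevent mass of $\mu$ from concentrating arbitrarily close to, but not at, points of a compact set where the density vanishes. The correct formulation, which the paper uses, is pointwise: $\overline u(x)=\underline u(x)$ for every $x\in\Omega\setminus J$, with the continuous representative understood through the semicontinuous envelopes of Definition~\ref{d:semicontinuous-envelopes} rather than through uniform continuity on compacta.
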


This result improves the VMO condition obtained in \cite{de2003structure}. Note that vanishing mean oscillation is weaker than being a Lebesgue point. The fact that every point outside of $J$ is a Lebesgue point is proposed as an open problem in \cite{de2003structure} and \cite{crippa2008regularizing}. It has been established only in one space dimension in \cite{deLellisRiviere2003}. Here we go a step further by proving that $u$ is in fact continuous outside of $J$ in any dimension. Our result was conjectured in \cite{deLellisRiviere2003} (see Remark 1.3 there). It is new even in the context of $BV$ solutions.

Note that Theorem \ref{t:intro-continuity} holds at every point in $x \in \Omega \setminus J$. Even if $x$ is an accumulation point of $J$, we obtain
\[ \lim_{r \to 0} \left( \esssup_{B_r(x)} u - \essinf_{B_r(x)} u \right) = 0.\]
Another interpretation of Theorem \ref{t:intro-continuity} is that there are a lower semicontinuous function $\underline u$ and an upper semicontinuous function $\overline u$ such that $\underline u = u = \overline u$ almost everywhere, and $\overline u = \underline u$ in $\Omega \setminus J$.

Another interesting conjecture concerns the concentration of the entropy dissipation measure on the jump set $J$. It is related to the first open question in \cite{perthame2002kinetic} (section 1.13). It is also mentioned in \cite{de2003structure}, \cite{crippa2008regularizing} and \cite{deLellisRiviere2003}. It is known to hold in the context of $BV$ solutions because of Volpert chain rule (see for example \cite{ambrosio2007}) and also in general for one dimensional problems (see \cite{deLellisRiviere2003}, \cite{bianchini2016} and \cite{bianchini2016structure}). Here, we  prove that for any $L^\infty$ entropy solution of \eqref{e:conservatiolaw-no-t}, in any dimension, satisfying Assumption \ref{a:genuine-nonlinearity}, there is no entropy dissipation outside of the closure $\overline J$. That is our second main result.

\begin{thm} \label{t:intro-no-entropy}
Let $u: \Omega \to \R$ be an entropy solution of \eqref{e:conservatiolaw-no-t} satisfying Assumption \ref{a:genuine-nonlinearity}. Let $\mu$ be its kinetic entropy dissipation measure and $J$ be its jump set. Then $\mu( (\Omega \setminus \overline J) \times \R) = 0$.
\end{thm}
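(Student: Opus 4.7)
The strategy is a two-step reduction. First, since $\Omega \setminus \overline J$ is open, a countable exhaustion reduces the theorem to the local claim: if $\overline B \subset \Omega \setminus \overline J$ is a closed ball, then $\mu(B \times \R) = 0$. By Theorem~\ref{t:intro-continuity}, $u$ is continuous, hence uniformly continuous, on $\overline B$, so the problem becomes one about continuity regions.

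A baseline bound follows from the entropy inequality alone. For the pure quadratic entropy $\eta(v) = (v-c)^2/2$ with $c$ chosen as a local mean of $u$ on $B_{2r}(x)$, the associated flux $q$ satisfies $|q(u) - q(c)| \leq C \, |u - c|^2$, and since $\int \nabla \phi = 0$ for any compactly supported cutoff,
\[
    \int \phi \, d\mu_\eta = \int \nabla \phi \cdot (q(u) - q(c)) \, dx \leq C \, \osc(u, B_{2r}(x))^2 \, r^{d-1}.
\]
With $\eta'' \equiv 1$, this is exactly $\mu(B_r(x) \times \R) \leq C \, \osc(u, B_{2r}(x))^2 \, r^{d-1}$. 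However, summing over a disjoint Vitali cover of a compact $K \subset B$ by balls of radius $r$ produces a divergent factor $|K|/r$, so continuity alone (without a Hölder rate) is insufficient. To close the argument I plan to upgrade the baseline to a volumetric form
\[
    \mu(B_r(x) \times \R) \leq \Phi\bigl(\osc(u, B_{2r}(x))\bigr) \, r^d, \qquad \Phi(s) \to 0 \text{ as } s \to 0^+,
\]
after which the covering argument yields $\mu(K \times \R) \leq C \, \Phi(\eps) |K|$ and sending $\eps \to 0$ concludes.

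The upgrade is to be obtained by iterating the paper's local oscillation estimate. I anticipate its form to be a De Giorgi dichotomy: either $\osc(u, B_r) \leq \theta \, \osc(u, B_{2r})$ for some universal $\theta < 1$, or $\mu(B_{2r} \times \R) \geq c \, \osc(u, B_{2r})^p \, r^{d-1}$, with the exponent $p$ quantified by the genuine nonlinearity parameter $\alpha$ from Assumption~\ref{a:genuine-nonlinearity}. Iterated dyadically around each $x \in B$, this dichotomy couples the modulus of continuity of $u$ to a geometric sum of dissipation contributions, and the extra factor of $r$ required for the covering should emerge from this coupling once one sums over all scales on which the oscillation does not contract. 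The principal obstacle is making this multi-scale iteration precise enough that Assumption~\ref{a:genuine-nonlinearity} actually forces the volumetric bound; as a fallback I would also consider working directly with the kinetic equation $a(v)\cdot\nabla_x \chi(u,v) = \partial_v \mu$, where on a continuity ball $\chi(u,v)$ is close to $\chi(c,v)$ for a fixed $c$, so that the right-hand side is forced to be small in $v$-averaged norms through the averaging-lemma mechanism underlying the oscillation estimate.
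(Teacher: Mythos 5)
Your reduction to the open continuity region $\Omega\setminus\overline J$ and your diagnosis of why the naive entropy bound fails are both correct: the quadratic-entropy identity only yields $\mu(B_r\times\R)\lesssim \osc(u,B_{2r})^2\,r^{d-1}$, and covering $K$ by balls of radius $r$ multiplies this by $|K|/r^d$, leaving a divergent $1/r$. But the step you rely on to repair this --- the dichotomy ``either $\osc(u,B_r)\le\theta\,\osc(u,B_{2r})$ or $\mu(B_{2r}\times\R)\ge c\,\osc^p\, r^{d-1}$'' --- is not available and is not the form of the paper's oscillation estimate. Theorem \ref{t:degiorgi} is an $L^\infty$--$L^1$ bound $\|u\|_{L^\infty(B_1)}\le C\|u\|_{L^1(B_2)}^\gamma$, i.e.\ only the \emph{first} De Giorgi step; the paper explicitly notes that the second step (improvement of oscillation) fails for conservation laws because shocks exist, so no such dichotomy with a universal $\theta<1$ can be expected. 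On the continuity set you therefore get a modulus of continuity with no rate, hence nothing geometric to sum against. Your fallback via the kinetic equation is circular: pairing $a(v)\cdot\nabla_x f=\partial_v\mu$ against $v$ and integrating in $v$ reproduces exactly the quadratic-entropy identity you started from, with the same lost factor of $r$. As written, the central volumetric bound $\mu(B_r\times\R)\le\Phi\bigl(\osc\bigr)\,r^d$ is asserted rather than proved, and it is essentially equivalent to the theorem itself.

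The paper closes the argument by an entirely different, soft mechanism going back to Dafermos. Once $u$ is continuous on an open set, Proposition \ref{p:characteristics} produces backward characteristics which, by continuity, are straight lines along which $u$ is constant; a topological degree argument (Lemma \ref{l:characterisitcs-two-sided}) extends them forward as well. Consequently $f(x+ta(v),v)$ is constant in $t$ for every $v$, so $a(v)\cdot\nabla_x f=0$, hence $\partial_v\mu=0$; since $\mu\ge0$ and is supported in a bounded range of $v$, this forces $\mu=0$ on the continuity region (Lemma \ref{l:continuous-solutions}). If you wish to pursue a quantitative route you would need to establish your volumetric bound by some independent means, which in effect amounts to re-deriving the characteristics argument.
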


Here, $\mu$ is the measure which appears in the right hand side of the usual entropy formulation of the conservation law introduced in \cite{lions1994kinetic}. The jump set $J$ was introduced in \cite{de2003optimality} and is recalled in \eqref{e:shock-set}. The domain $\Omega$ can be any open set.

The conjecture is not fully resolved since the jump set $J$ might be strictly smaller than its closure $\overline J$ in some pathological cases.

Our third main result concerns the decay of the solution to \eqref{e-conservationlaw} in $L^\infty$ as $t \to \infty$. In order to obtain a sharp exponent, we need a more precise version of Assumption \ref{a:genuine-nonlinearity}. 

\begin{ass} \label{a:hormander}
Let the function $u$ which solves \eqref{e:conservatiolaw-no-t} take values in a bounded closed interval $I \subset \R$. We assume that there is a positive integer $m$ so that the function $a$ is of class $C^m(I)$ and for every $v \in I$, the vectors $\{a(v), a'(v), a''(v), \dots, a^{(m)}(v)\}$ span $R^d$. 
\end{ass}

It is easy to see that when $a$ is sufficiently smooth, Assumption \ref{a:genuine-nonlinearity} and Assumption \ref{a:hormander} are actually equivalent and $m =  1/\alpha$. This is explained in detail in subsection \ref{ss:assumptions}. Note that we stated these assumptions for the function $a$ in \eqref{e:conservatiolaw-no-t}. For $t$-dependent equations as in \eqref{e-conservationlaw}, we would require $(1,a(v))$ to satisfy these assumptions instead of $a(v)$.

The advantage of Assumption \ref{a:hormander} over Assumption \ref{a:genuine-nonlinearity} is that we can scale the solution in a precise way preserving this condition (see Section \ref{s:scaling}).

\begin{thm} \label{t:intro-decay}
Let $u$ be an entropy solution to \eqref{e-conservationlaw}. We assume that the initial data $u_0 := u(0,\cdot) \in L^1(\R^d) \cap L^\infty(\R^d)$ and also Assumption \ref{a:hormander} holds for $(1,a(v))$. Let
\[ \gamma_0 := \left( 1 + \frac{   d (2m-d+1)  }2 \right)^{-1}. \]
 Then, for any $\gamma \in (0, \gamma_0)$, we have
\[ |u(t,x)| \leq C \|u_0\|_{L^1}^\gamma t^{-d \gamma} \qquad \text{a.e.},\]
where $C$ is a constant that depends on $\|u_0\|_{L^\infty}$ and the function $a$. 
\end{thm}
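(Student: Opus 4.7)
The plan is to combine the $L^1$ contraction and $L^\infty$ maximum principle for entropy solutions, the scaling of Section~\ref{s:scaling} that preserves Assumption~\ref{a:hormander}, and the local oscillation estimate that underlies the earlier theorems. The $L^1$ contraction gives $\|u(t,\cdot)\|_{L^1}\le \|u_0\|_{L^1}$, and the maximum principle gives $\|u(t,\cdot)\|_{L^\infty}\le \|u_0\|_{L^\infty}$, for all $t>0$. The goal is to upgrade the latter to algebraic decay by squeezing $M := \esssup_{x}|u(t_0,x)|$ between a rescaled pointwise lower bound (from the oscillation estimate) and conservation of mass.

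Fix $t_0>0$ and choose $x_0$ almost realizing $M$. Rescale $v(s,y) = M^{-1} u(t_0+\tau s,\,x_0+r y)$ and pick $(\tau,r)$ as dictated by Section~\ref{s:scaling}, so that $v$ solves a scalar conservation law whose flux still satisfies Assumption~\ref{a:hormander} with the \emph{same} constants. Because $a^{(k)}$ picks up a factor depending on $M^k$ under the value rescaling $u\mapsto Mu$, preserving the quantitative span of $\{a,a',\dots,a^{(m)}\}$ in $\R^d$ forces an algebraic relation among $M$, $\tau$ and $r$. The worst case for the span comes from the top block $a^{(m-d+1)},\dots,a^{(m)}$, producing a combined factor $M^{(m-d+1)+\cdots+m} = M^{d(2m-d+1)/2}$; this is the origin of the exponent $d(2m-d+1)/2$ appearing in $\gamma_0$.

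Once the unit cylinder $(-1,0)\times B_1$ lies inside the domain of $v$, apply the local oscillation estimate (the De Giorgi--type tool developed in the paper that also yields Theorems~\ref{t:intro-continuity} and \ref{t:intro-no-entropy}). Since $|v|$ is comparable to $1$ near the origin, this estimate produces a time slice $s_*\in(-1,0)$ and a set of positive measure on which $|v(s_*,\cdot)|\ge 1/2$; unfolding the scaling translates this into $\int_{B_r(x_0)} |u(t_0+\tau s_*,x)|\,dx \ge c\, M\, r^{d}$. Comparing with the conservation bound $\|u(t,\cdot)\|_{L^1}\le\|u_0\|_{L^1}$ gives $M\,r^d \le C\|u_0\|_{L^1}$. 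The scaling rule expresses $\tau$ as an explicit power of $M$ and $r$; requiring $\tau \le t_0/2$ (so that the backward cylinder stays in $\{t>0\}$) fixes the smallest admissible $r$ as an explicit power of $t_0$ and $M$. Substituting into $M r^d \le C\|u_0\|_{L^1}$ and solving for $M$ produces $M \le C\|u_0\|_{L^1}^{\gamma_0}\,t_0^{-d\gamma_0}$, up to the small loss $\gamma<\gamma_0$ that reflects continuity of the constants in the oscillation estimate.

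The main obstacle is verifying that the rescaling really does preserve Assumption~\ref{a:hormander} with uniform constants, so that the local oscillation estimate applies to $v$ with constants independent of $(t_0,x_0,M)$. Once that scaling analysis is in hand, the decay reduces to a clean two-scale balance between the conservation of $L^1$ mass and the local $L^\infty$-from-$L^1$ bound produced by the oscillation estimate.
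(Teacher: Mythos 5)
Your argument is essentially correct, but it follows a genuinely different route from the paper's. The paper proceeds in two stages: first a crude decay $|u(t,x)|\leq C t^{-d\gamma}\|u_0\|_{L^1}^{\gamma}$ with the (non-sharp) exponent $\gamma$ coming straight out of Theorem \ref{t:degiorgi} applied on a backward cylinder of size $\sim t$ (Lemma \ref{l:decay}); then an infinite bootstrap in which the bound already proved supplies $\|u(t/2,\cdot)\|_{L^\infty}\leq \lambda$, the solution is rescaled by $S_\lambda$, and the crude decay is reapplied, improving the exponent via $\gamma\mapsto 2\gamma-\gamma^2/\gamma_0$; the iteration converges to but never reaches the fixed point $\gamma_0$, which is exactly why the theorem is stated for $\gamma<\gamma_0$. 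You instead run a single ``mass concentration'' argument at a near-maximum point: rescale by $S_M$ with $M$ the actual sup norm, use Theorem \ref{t:degiorgi} in reverse to force a definite amount of $L^1$ mass into the backward cylinder, and compare with $\|u(t,\cdot)\|_{L^1}\leq\|u_0\|_{L^1}$. Done carefully this closes in one shot (and would even reach the endpoint exponent $\gamma_0$, modulo the constants); what the paper's version buys in exchange is that it only ever rescales by the already-proved bound $\lambda$, so it never needs to locate the maximum or to treat the regime where $M$ exceeds the threshold $v_0$ of Section \ref{s:scaling} --- a case you must dispatch separately (it is where the dependence of $C$ on $\|u_0\|_{L^\infty}$ enters). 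Three points in your write-up need tightening. First, the displayed rescaling $v(s,y)=M^{-1}u(t_0+\tau s, x_0+ry)$ with a scalar $r$ cannot preserve Assumption \ref{a:hormander} uniformly; you must use the anisotropic map $x_0+\rho S_M y$, and then the factor $M^{d(2m-d+1)/2}$ enters as $\det S_M$ in the volume element when you unfold the $L^1$ norm --- your inequalities $\int_{B_r(x_0)}|u|\,dx\geq cMr^d$ and $Mr^d\leq C\|u_0\|_{L^1}$ only reproduce $\gamma_0$ if $r$ is read as the geometric-mean scale $\rho M^{d(2m-d+1)/(2d)}$, which you should make explicit. Second, the claim that $\|v\|_{L^\infty}$ over the open backward cylinder is comparable to $1$ requires an argument: the final time slice has measure zero in space-time, so you need either the causal form of the estimate in Remark \ref{r:causality} together with $u\in C([0,\infty),L^1)$, or the local maximum principle of Lemma \ref{l:local-max-pple} (applied in the rescaled variables, where the speed set is uniformly bounded) to propagate the near-maximum backward into the interior. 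Third, you should split $u$ into $u_+$ and $u_-$, since Theorem \ref{t:degiorgi} is stated for nonnegative subsolutions.
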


The exponent $t^{-\gamma_0}$ is the optimal decay in $L^\infty$ for solutions to conservation law equations at least in the case of one space dimension (see Remark \ref{r:optimal-decay}). It is well known that entropy solutions to the Burgers equation $u_t + u \, u_x = 0$ satisfy the estimate
\[ |u(t,x)| \leq C t^{-1/2} \|u_0\|_{L^1}^{1/2}.\]
In this case $m=d=1$ and $\gamma_0 = 1/2$.

In some cases, we can compute explicitly how the constant $C$ in Theorem \ref{t:intro-decay} depends on $\|u_0\|_{L^\infty}$. See Remark \ref{r:decay-with-Linfty-norm}.

As far as we know, previous results concerning the decay rate of solutions in $L^\infty$ were restricted to some particular 1D models (see for example the classical work \cite{lax1957} for $A$ convex). There are some interesting recent results about the decay in $L^1$ (by interpolation also its $L^p$ norm for $1\leq p<\infty$) in the periodic setting, which is of rather different nature (See \cite{chen1999}, \cite{chenperthame2009}, \cite{Debussche2009}, \cite{panov2013}, \cite{dafermos2013}, \cite{gess2017long}). In fact, combining the results in the literature with our theorem \ref{t:degiorgi} tells us that periodic solutions converge to their averages in $L^\infty$ as well.

We conclude our paper with a discussion in Proposition \ref{p:traces} of some consequences of our results regarding the traces of the function $u$ on the jump set $J$.

The key for the proofs of all these results is the oscillation estimates given by Theorem \ref{t:degiorgi}. This theorem gives us an estimate for the pointwise oscillation of the function $u$ in terms of its averaged oscillation. Its proof uses some ideas originally deveoped by De Giorgi for elliptic equations in \cite{e1957sulla}.

The famous proof of De Giorgi's theorem for elliptic equations has two steps. The first step consist in obtaining a local estimate in $L^\infty$ in terms of the $L^2$ norm of the solution in a larger ball. In the proof of this first step, a local gain of integrability for truncations is obtained using the energy dissipation inequality and this is iterated to finally get the estimate in $L^\infty$. In our proof we set up a similar iteration. We use the gain of integrability given by averaging instead of energy dissipation. The regularization effect of velocity averaging is a powerful tool in the study of kinetic equations and conservation laws. It originated in \cite{golse1988}.

The second step in the proof of De Giogi gives a local improvement of oscillation which leads to the H\"older continuity of solutions to uniformly elliptic equations. That step will not hold in general for conservation laws, since discontinuities do occur. However, outside of the jump set $J$, with the help of the VMO condition obtained in \cite{de2003structure}, we deduce the continuity of the solution to prove Theorem \ref{t:intro-continuity} in section \ref{s:continuity}.

It is not the first time that De Giorgi's method is used outside of the realm of classical elliptic or parabolic equations. Some other previous unorthodox applications of this method are to the surface quasi-geostrophic equation \cite{caffarelli2010drift}, to the Hamilton-Jacobi equation \cite{chan2014giorgi}, to certain one-dimensional active scalar equation \cite{silvestre2016transport}, to kinetic-diffusion equations \cite{golse2016harnack} and to the Boltzmann equation \cite{imbert2016weak}. In each of these applications, the underlying mechanism by which we obtain a local gain of integrability is fundamentally different.

The proof of Theorem \ref{t:intro-no-entropy} uses the continuity of $u$ outside of $J$, which is given in Theorem \ref{t:intro-continuity}. For continuous solutions to conservation laws, we prove that there are well defined characteristic curves that are straight lines. The lack of entropy dissipation follows from this characterization. This idea goes back to the work of C. Dafermos \cite{dafermos2006}. We prove Theorem \ref{t:intro-no-entropy} in Section \ref{s:no-entropy}. 

The decay given in Theorem \ref{t:intro-decay} is obtained by combining the estimate in Theorem \ref{t:degiorgi} with the scaling of the equation explained in Section \ref{s:scaling}. We prove Theorem \ref{t:intro-decay} in section \ref{s:decay}.

Most regularity results based on averaging hold also for \emph{generalized solutions} or \emph{quasi-solutions}. In fact, the regularization results by averaging are optimal within this class (see \cite{de2003optimality}). The structural results in \cite{de2003structure} and \cite{crippa2008regularizing} also hold for this generalized notion of solution. Interestingly, the results that we give here hold for entropy solutions only. Indeed, it is easy to find examples to see that Theorem \ref{t:intro-decay} is not true for generalized solutions. Theorem \ref{t:degiorgi} holds for entropy solutions only. In our proof, this plays a role when considering truncations with Lemma \ref{l:max-of-subsolutions} and estimating the total variation of $\mu_0$ and $\mu_1$ in each iteration using Lemmas \ref{l:total-measure1} and \ref{l:total-measure0}.

\section{Preliminaries}
\label{s:preliminaries}

\subsection{Entropy solutions}

We recall the following standard definition of entropy solutions and subsolutions.

\begin{defn} \label{d:entropysolutions}
For every convex function $\eta : \R \to \R$, let $q: \R \to \R^d$ be a function such that
\[ q_i'(v) = \eta'(v) a_i(v).\]
for $i = 1,\dots,d$.

Let $u : \R^d \to \R$ belong to $L^\infty(\R^d)$. We say $u$ is an entropy solution when the following happens. For every convex function $\eta$, and any smooth, compactly supported, test function $\varphi \geq 0$, we have
\begin{equation} \label{e-entropycondition}
 \int_{\R^n} q(u) \cdot \grad \varphi \dd x \geq 0. 
\end{equation}

When the inequality \eqref{e-entropycondition} holds only for $\eta$ convex and non-decreasing, we say $u$ is an entropy subsolution.
\end{defn}


It is well known that it is enough to consider $\eta(u)$ of the form $(u-\ell)_+$ and $(\ell - u)_+$ with $\ell \in \R$, in order to verify that $u$ is an entropy solution.

For $t$-dependent equations as in \eqref{e-conservationlaw}, it is well known (since \cite{kruvzkov1970first}) that for any initial condition $u_0 \in L^1(\R^d) \cap L^\infty(\R^d)$,  there is a unique entropy solution $u \in C([0,+\infty),L^1(\R^d)) \cap L^\infty([0,+\infty) \times L^\infty(\R^d))$.

A few times in this paper, we will use that the maximum between an entropy subsolution and a constant is also an entropy subsolution, which is a straight forward consequence of the definitions. However, it is also a particular case of the general fact that the maximum between two entropy subsolutions is also an entropy  subsolution. We state and prove that interesting fact as a lemma here. As far as we know, it was first observed in the work of P.L. Lions and P. Souganidis \cite{lions-video}.

\begin{lemma} \label{l:max-of-subsolutions}
Let $u$ and $v$ be two entropy subsolutions of \eqref{e-conservationlaw} in a convex domain $\Omega$. Then $\max(u,v)$ is also an entropy subsolution.
\end{lemma}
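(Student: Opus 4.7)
The plan is to follow Kruzhkov's doubling-of-variables technique. First I would reduce the problem: any convex non-decreasing $\eta$ can be written as a non-negative linear combination of the identity $w \mapsto w$ and the Kruzhkov family $\eta_k(w) := (w-k)_+$, $k \in \R$, so it suffices to verify \eqref{e-entropycondition} for $w = \max(u,v)$ with $\eta = \eta_k$ and every $k \in \R$. The corresponding flux is $q_k(w) = A(\max(w,k)) - A(k)$; since $A(k)$ is constant it drops out against $\nabla \varphi$, and the target reduces to
\begin{equation*}
  \int_\Omega A(\max(u(x),v(x),k)) \cdot \nabla \varphi(x) \dd x \geq 0
\end{equation*}
for every $k \in \R$ and every non-negative $\varphi \in C_c^\infty(\Omega)$.

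Next I would introduce a second spatial variable. Let $\rho_\delta$ be a standard even mollifier on $\R^d$, and, for $\delta$ smaller than the distance from $\supp \varphi$ to $\bdary \Omega$, take the test function $\psi(x,y) := \varphi(x)\rho_\delta(x-y)$; this is non-negative, smooth, and supported in $\Omega \times \Omega$ (convexity of $\Omega$ keeps the geometry clean). For each fixed $y \in \Omega$, $\max(v(y),k)$ is a real constant, so applying the Kruzhkov subsolution inequality for $u$ at this level with test function $\psi(\cdot,y)$, then integrating in $y$, gives
\begin{equation*}
  \text{(I)}\qquad \int\!\!\int \bigl[A(M(x,y)) - A(\max(v(y),k))\bigr] \cdot \nabla_x \psi(x,y) \dd x \dd y \geq 0,
\end{equation*}
where $M(x,y) := \max(u(x),v(y),k)$. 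The symmetric inequality for $v$ reads
\begin{equation*}
  \text{(II)}\qquad \int\!\!\int \bigl[A(M(x,y)) - A(\max(u(x),k))\bigr] \cdot \nabla_y \psi(x,y) \dd x \dd y \geq 0.
\end{equation*}

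The third step is to add (I) and (II) and pass to the limit $\delta \to 0$. Splitting $\nabla_x \psi = \rho_\delta \nabla \varphi + \varphi \nabla_x \rho_\delta$ and using $\nabla_y \rho_\delta(x-y) = -\nabla_x \rho_\delta(x-y)$, the two copies of $A(M(x,y))$ appearing against the singular factor $\nabla_x \rho_\delta$ cancel, and the singular contribution collapses to
\begin{equation*}
  \int\!\!\int \bigl[A(\max(u(x),k)) - A(\max(v(y),k))\bigr]\,\varphi(x)\,\nabla_x \rho_\delta(x-y) \dd x \dd y.
\end{equation*}
The term with $A(\max(u(x),k))$ vanishes because $\int \nabla_x \rho_\delta(x-y) \dd y = 0$, and an integration by parts in $x$ turns the remaining term, in the limit, into $\int A(\max(v,k)) \cdot \nabla \varphi \dd x$. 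The regular piece $\rho_\delta \nabla \varphi$ of (I) converges, via a Lebesgue-point argument on $v$, to $\int [A(\max(u,v,k)) - A(\max(v,k))] \cdot \nabla \varphi \dd x$. Adding the two limits, the $A(\max(v,k))$ terms cancel, and I obtain $\int A(\max(u,v,k)) \cdot \nabla \varphi \dd x \geq 0$, the desired inequality.

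The main obstacle is justifying the $\delta \to 0$ passage in the singular integral, since it mixes the discontinuous function $v$ with the singular kernel $\nabla \rho_\delta$. The saving observation is that after integrating by parts in $x$, the kernel $\rho_\delta$ ends up acting only on the smooth factor $\nabla \varphi$, so the limit reduces to the standard convergence $\rho_\delta \ast \nabla \varphi \to \nabla \varphi$, which is then tested against the bounded function $A(\max(v,\cdot))$.
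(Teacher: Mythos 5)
Your proof is correct and follows essentially the same route as the paper: Kruzhkov doubling of variables, applying the subsolution inequality for $u$ at the level determined by $v(y)$ and symmetrically for $v$, then adding and collapsing the diagonal. The only cosmetic difference is that the paper tests against the symmetric function $\varphi\left(\frac{x+y}2\right) b_\eps(x-y)$, so the singular terms cancel outright, whereas your asymmetric choice $\varphi(x)\rho_\delta(x-y)$ leaves residual singular terms that you correctly dispose of via $\int \nabla_x \rho_\delta(x-y)\dd y = 0$ and an integration by parts.
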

\begin{proof}
The proof is based on Krushkov's idea of doubling variables.

We want to verify that the function $w(x) = \max( u(x) , v(x))$ satisfies \eqref{e-entropycondition} for any $\eta$ convex and nondecreasing.

For every fixed $y \in \Omega$, we apply the definition \eqref{e-entropycondition} to $u$ with $\tilde \eta(u) = \eta(\max(u,v(y)))$. Note that this function $\tilde \eta$ is convex and non-decreasing. We observe that $\tilde q(u) = q(\max(u,v(y)))$ satisfies $\tilde q'(u) = \tilde \eta'(u) a(u)$. Thus,
\[ \int_{\Omega} q(\max(u(x),v(y))) \cdot \nabla_x \left[ \varphi\left( \frac{x+y}2 \right) b_\eps\left( x-y \right) \right] \dd x \geq 0.\]
Here $\varphi : \Omega \to \R$ is an arbitrary test function and $b_\eps$ is an approximation of the Dirac mass.

Likewise, for every fixed $x \in \Omega$, we obtain
\[ \int_{\Omega} q(\max(u(x),v(y))) \cdot \nabla_y \left[ \varphi\left( \frac{x+y}2 \right) b_\eps\left( x-y \right) \right] \dd x \geq 0.\]

Integrating the first inequality in $y$, the second in $x$, and adding them, we obtain
\[ \iint_{\Omega \times \Omega} q(\max(u(x),v(y))) \nabla \varphi \left( \frac{x+y}2 \right) b_\eps\left( x-y \right) \dd x \dd y \geq 0.\]
Taking the limit as $\eps \to 0$, we obtain
\[ \int_{\Omega} q(\max(u(x),v(x))) \nabla \varphi (x) \dd x \geq 0.\]
This justifies that $w(x) = \max(u(x),v(x))$ is an entropy subsolution.
\end{proof}

We recall the kinetic formulation of conservation laws given in \cite{lions1994kinetic}. Given a function $u: \Omega \to \R$, we define the function $f : \Omega \times \R \to \{-1,0,1\}$,
\begin{equation} \label{e:kinetic-formulation}
f(x,v) = \begin{cases}
1 &\text{if } 0 < v < u(x), \\
-1  &\text{if } u(x) < v < 0, \\
0 &\text{otherwise.}
\end{cases}
\end{equation}

It was proved in \cite{lions1994kinetic} than $u$ is an entropy solution of \eqref{e:conservatiolaw-no-t} if and only if there is a nonnegative measure $\mu$ in $\Omega \times \R$ such that $a(v) \cdot \nabla_x f = \partial_v \mu$. 

Given a solution $u : \Omega \to \R$ to \eqref{e:conservatiolaw-no-t}, the jump set $J$, introduced in \cite{de2003structure}, is given by
\begin{equation} \label{e:shock-set}
 J :=\left \{ x \in \Omega : \limsup_{r \to 0} \frac{\mu(B_r(x) \times \R)}{r^{d-1}} > 0. \right \}.
\end{equation}

\subsection{Semicontinuous envelopes}

For some of the arguments in this paper, it will be convenient to consider the upper and lower semicontinuous envelopes of a function $u$. They are given in the following definition.

\begin{defn} \label{d:semicontinuous-envelopes}
Let $u \in L^\infty(\Omega)$. We define the lower and upper semicontinuous envelopes of $u$, which we denote $\underline u$ and $\overline u$ respectively, by the following formulas
\[ \underline u(x) = \lim_{r \to 0} \left( \essinf_{\Omega \cap B_r(x)} u \right), \qquad \overline u(x) = \lim_{r \to 0} \left( \esssup_{\Omega \cap B_r(x)} u \right).\]
\end{defn}

It is not difficult to verify that $\underline u$ is lower semicontinuous, $\overline u$ is upper semicontinuous, and $\underline u(x) \leq u(x) \leq \overline u(x)$ almost everywhere. For a general function $u \in L^\infty$, there is no reason why $\overline u(x) = \underline u(x)$ at any point $x$. These equality holds at points where $u$ is continuous. These functions will be meaningful once we establish that $u$ is continuous almost everywhere, after Theorem \ref{t:intro-continuity}. After that, we will deduce that $\underline u(x) = \overline u(x) = u(x)$ almost everywhere. Both functions $\overline u$ and $\underline u$ will be natural representatives of $u$ in the same class in $L^\infty$.

Note also that for any compact set $K \subset \Omega$, we have
\begin{align*} 
 \min \{\underline u(x) : x \in K\} &= \lim_{\delta \to 0} \essinf \{ u(x) : x \in K_\delta\},\\
 \max \{\overline u(x) : x \in K\} &= \lim_{\delta \to 0} \esssup \{ u(x) : x \in K_\delta\},
\end{align*}
where $K_\delta$ is a $\delta$-neighborhood of $K$.

\subsection{Equivalence of Assumptions \ref{a:genuine-nonlinearity} and \ref{a:hormander}}
\label{ss:assumptions}

We discuss how, for a sufficiently smooth function $a$, Assumption \ref{a:genuine-nonlinearity} relates to Assumption \ref{a:hormander}.

Our first proposition shows that Assumption \ref{a:genuine-nonlinearity} implies Assumption \ref{a:hormander} for smooth enough functions.

\begin{prop} \label{p:ass1toass2}
Let $a : I \to \R$ be a function so that Assumption \ref{a:genuine-nonlinearity} holds. Let $m$ be the largest integer smaller or equal to $1/\alpha$, and let us assume that $a \in C^{m+1}(I)$. Then Assumption \ref{a:hormander} holds.
\end{prop}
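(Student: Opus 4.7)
The plan is to argue by contrapositive: suppose there is a point $v_0 \in I$ at which the vectors $\{a(v_0), a'(v_0), \dots, a^{(m)}(v_0)\}$ fail to span $\R^d$, and derive a violation of Assumption \ref{a:genuine-nonlinearity}. Since their span has dimension $< d$, there exists a unit vector $\xi \in \R^d$ orthogonal to all of them, i.e.\ $\xi \cdot a^{(k)}(v_0) = 0$ for $k=0,1,\dots,m$.

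Consider the scalar function $g(v) := a(v) \cdot \xi$. By the previous line, $g(v_0) = g'(v_0) = \dots = g^{(m)}(v_0) = 0$. Since $a \in C^{m+1}(I)$ and $I$ is compact, Taylor's theorem with remainder gives a constant $M>0$ (depending on $\|a\|_{C^{m+1}(I)}$) such that
\[
  |g(v)| \leq M |v - v_0|^{m+1} \qquad \text{for every } v \in I.
\]
Therefore $|a(v) \cdot \xi| < \delta$ whenever $|v-v_0| < (\delta/M)^{1/(m+1)}$, which shows
\[
  \bigl| \{ v \in I : |a(v)\cdot\xi| < \delta \} \bigr| \geq c \, \delta^{1/(m+1)},
\]
for some $c>0$ and all $\delta$ sufficiently small (the constant $c$ absorbs the factor of $1/2$ that appears if $v_0$ is an endpoint of $I$, since only a one-sided neighborhood is needed).

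Finally I would compare exponents. By the definition $m = \lfloor 1/\alpha \rfloor$, we always have $m+1 > 1/\alpha$, hence $1/(m+1) < \alpha$, so $\delta^{1/(m+1)} / \delta^{\alpha} \to \infty$ as $\delta \to 0^+$. This contradicts the bound $|\{v \in I : |a(v)\cdot\xi| < \delta\}| \leq C\delta^\alpha$ from Assumption \ref{a:genuine-nonlinearity} for all small enough $\delta$, completing the proof. The argument has no real obstacle; the only point requiring any care is the endpoint case for $v_0$, which is handled by noting that the one-sided Taylor neighborhood still contributes an interval of length proportional to $\delta^{1/(m+1)}$, and the fact that the constants in the Taylor bound are uniform on the compact interval $I$ because $a \in C^{m+1}(I)$.
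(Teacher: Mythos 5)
Your proof is correct and is essentially the contrapositive of the paper's own argument: both rest on the order-$(m+1)$ Taylor expansion of $v \mapsto a(v)\cdot\xi$ together with the measure bound of Assumption \ref{a:genuine-nonlinearity} and the exponent comparison $m+1 > 1/\alpha$. You merely make explicit the Taylor step that the paper leaves implicit, so no further changes are needed.
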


\begin{proof}
Let $v \in I$ be an arbitrary point and $|\xi|=1$. Since Assumption \ref{a:genuine-nonlinearity} holds, for any $h > 0$, there must be a point $w \in I \cap [v-h,v+h]$ such that $|a(w) \cdot \xi| \gtrsim h^{1/\alpha}$. Therefore, $a^{(j)}(v) \cdot \xi \neq 0$ for some $j \in \{0,1,\dots,m\}$. Since this holds for every unit vector $\xi$, then the vectors $\{ a(v), a'(v), \dots a^{(m)}(v)\}$ generate $\R^d$.

\end{proof}


The opposite implication, from Assumption \ref{a:hormander} to Assumption \ref{a:genuine-nonlinearity} is trickier. We start with a preparatory lemma.

\begin{lemma} \label{l:pre-ass}
Let $I \subset \R$ be an interval. Let $f : I \to \R$. Assume that for some integer $k \geq 0$, we know that $f^{(k)}(v) \geq 1$ for all $v \in I$. Then, for all $\delta \in (0,1)$, 
\[ |\{ v \in I : |f(v)| < \delta^k\}| \leq C \delta.\]
The constant $C$ depends on $k$ only (not on the size of $I$).
\end{lemma}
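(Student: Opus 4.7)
The plan is to argue by induction on the integer $k$, with the constant $C = C_k$ depending only on $k$. The base case $k=0$ is vacuous, since $|f(v)| \geq 1 = \delta^0$ (for $\delta < 1$) forces $\{v \in I : |f(v)| < \delta^0\}$ to be empty. The case $k=1$ is also straightforward: $f' \geq 1$ means $f$ is strictly increasing with $1$-Lipschitz inverse, so the preimage of $(-\delta,\delta)$ has measure at most $2\delta$.

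For the inductive step from $k-1$ to $k$, I would apply the inductive hypothesis to $g := f'$, which satisfies $g^{(k-1)} \geq 1$ on $I$. This gives
\[ |\{v \in I : |f'(v)| < \delta^{k-1}\}| \leq C_{k-1}\,\delta. \]
On the open complement $U := \{v \in I : |f'(v)| > \delta^{k-1}\}$, the continuous function $f'$ has constant sign on each connected component, so $f$ is strictly monotone there with $|f'| \geq \delta^{k-1}$. Hence on each component $U_j$ of $U$,
\[ |\{v \in U_j : |f(v)| < \delta^k\}| \leq \frac{2\delta^k}{\delta^{k-1}} = 2\delta. \]
Adding the contribution of the set where $|f'| < \delta^{k-1}$ to the contributions on each component of $U$, I obtain an estimate of the form $C_k \leq (\text{number of components of } U)\cdot 2 + C_{k-1}$, and the proof is done provided the number of components is bounded by a constant depending only on $k$.

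The main obstacle is exactly this last point: bounding the number of components of $U$. The endpoints of the components that lie in the interior of $I$ are precisely points where $f'$ attains the value $\pm \delta^{k-1}$. To bound these, I would prove by a separate induction on $j$ that, for $j=1,\dots,k-1$ and any constant $c \in \R$, the equation $f^{(k-j)}(v) = c$ has at most $j$ solutions in $I$. The base case $j=1$ holds because $f^{(k-1)}$ is strictly increasing (its derivative being $\geq 1$). The inductive step is Rolle's theorem: between two zeros of $f^{(k-j)} - c$ there must be a zero of the derivative $f^{(k-j+1)}$, so the number of zeros increases by at most one per differentiation. Taking $j=k-1$ gives that $f' = \pm\delta^{k-1}$ has at most $2(k-1)$ solutions in total, so $U$ has at most $k$ connected components, yielding $C_k \leq 2k + C_{k-1}$ and closing the induction.

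An alternative route would be to avoid the induction altogether by selecting $k+1$ points $v_0<\dots<v_k$ in the closure of $\{|f|<\delta^k\}$ with consecutive gaps at least $c_k |\{|f|<\delta^k\}|$, and applying the divided difference identity $f[v_0,\dots,v_k] = f^{(k)}(\xi)/k!$ together with $f^{(k)} \geq 1$ to bound the total measure directly. I would prefer the inductive approach written above, however, since it parallels the structure the paper uses elsewhere and keeps the dependence of $C_k$ on $k$ transparent.
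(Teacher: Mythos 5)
Your proof is correct, but it takes a genuinely different route from the paper's. The paper also inducts on $k$, but at each step it keeps $f$ and pivots on the derivative one order below the top: since $f^{(k)}$ is increasing with slope at least $1$, the interval $I$ splits into three \emph{consecutive} subintervals on which $f^{(k)} \leq -\delta$, $|f^{(k)}| \leq \delta$, and $f^{(k)} \geq \delta$; the middle one has length at most $2\delta$, and on the two outer ones the rescaled functions $f/(\mp\delta)$ again satisfy the order-$k$ hypothesis, so the inductive hypothesis applies directly and gives $C_{k+1} = 2C_k + 2$. Because the pivot derivative is monotone, the decomposition automatically consists of at most three intervals and no component counting is needed. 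You instead descend from the bottom, applying the inductive hypothesis to $f'$, and must then control the set $U=\{|f'| > \delta^{k-1}\}$ by hand; this is exactly where your auxiliary Rolle-type zero-counting lemma is required, and it is the one place needing care: your claim that $U$ has at most $k$ components is slightly optimistic (counting the at most $2(k-1)$ interior solutions of $f' = \pm\delta^{k-1}$ gives at most $2k-1$ components), but any bound depending only on $k$ suffices, so this does not affect the result. A small side benefit of your version is a polynomially growing constant $C_k = O(k^2)$, versus $C_k = 2^{k+1}-2$ from the paper's recursion; the paper's version is shorter because the monotonicity of $f^{(k)}$ does the combinatorics for free. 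Your divided-difference alternative is also a valid standard route (it is essentially the sublevel-set form of the van der Corput lemma), though it is not what the paper does.
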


\begin{proof}
We will prove it by induction in $k$. The case $k=0$ is trivial with $C=0$. Let us assume we have established the result for some given $k$ with a constant $C_k$. Let us prove it for $k+1$.

Since we have $f^{(k+1)}(v) \geq 1$ for all $v \in I$, then $f^{(k)}$ is increasing with derivative always larger or equal to one. The interval $I$ can be decomposed into three subintervals $I + I_1 + I_2 + I_3$, such that
\begin{align*}
f^{(k)}(v) &\leq -\delta \qquad \text{ when } v \in I_1, \\
|f^{(k)}(v)| &\leq \delta \qquad \text{ when } v \in I_2, \\
f^{(k)}(v) &\geq \delta \qquad \text{ when } v \in I_3.
\end{align*}
Moreover, $|I_2| \leq 2 \delta$.

For each subinterval $I_1$ and $I_2$, we apply the inductive hypothesis to $f / (-\delta)$ and $f/\delta$ respectively. We obtain,
\[ |\{ v \in I_j: |f(v)/\delta| <\delta^k\}| \leq C_k \delta \qquad \text{for } j = 1,3.\]
Therefore
\[ |\{ v \in I : |f(v)| < \delta^{k+1}\}| \leq (2C_k + 2) \delta.\]
So, we finish the proof setting $C_{k+1} = (2C_k+2)$.
\end{proof}

\begin{prop} \label{p:ass2toass1}
Let $a$ be a function so that Assumption \ref{a:hormander} holds. Then also Assumption \ref{a:genuine-nonlinearity} holds with $\alpha = 1/m$. The constant $C$ in Assumption \ref{a:genuine-nonlinearity} depends on the modulus of continuity of $a$ in $C^m$ and the positive constant
\[ c_0 := \min_{v\in I,|\xi|=1} \max \{ |\xi \cdot a^{(j)}(v)| : j = 0,1,\dots,m\}.\]
\end{prop}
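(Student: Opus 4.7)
Fix a unit vector $\xi$ and set $f(v) := a(v)\cdot\xi$. The plan is to cover $I$ by a uniformly bounded number of subintervals on each of which some fixed derivative $f^{(j)}$ (with $j\le m$) is bounded below in absolute value, and then to apply Lemma~\ref{l:pre-ass} on each piece.

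For the covering step, I would use the definition of $c_0$: for every $v_0\in I$ there is an index $j=j(v_0,\xi)\in\{0,\dots,m\}$ with $|f^{(j)}(v_0)|\ge c_0$. Since $f^{(j)}=a^{(j)}\cdot\xi$ is continuous with a modulus of continuity controlled by that of $a^{(j)}$ and independent of $\xi$ (by Cauchy--Schwarz, using $|\xi|=1$), one obtains a radius $r_0>0$, depending only on the $C^m$ modulus of continuity of $a$ and on $c_0$, such that $|f^{(j)}(v)|\ge c_0/2$ whenever $|v-v_0|\le r_0$. Cover $I$ by $N := \lceil |I|/r_0\rceil$ intervals $I_1,\dots,I_N$ of length at most $r_0$, with $N$ independent of $\xi$; on each $I_\ell$ there is a single index $j_\ell\le m$ with $|f^{(j_\ell)}|\ge c_0/2$ throughout.

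On each $I_\ell$ the function $f^{(j_\ell)}$ is continuous and does not vanish, so it has a fixed sign; thus $\pm(2/c_0)f$ has its $j_\ell$-th derivative $\ge 1$ on $I_\ell$, and Lemma~\ref{l:pre-ass} gives
\[
\bigl|\{v\in I_\ell:(2/c_0)|f(v)|<\delta^{j_\ell}\}\bigr|\le C_{j_\ell}\,\delta
\qquad\text{for }\delta\in(0,1).
\]
Substituting $\eta=(c_0/2)\delta^{j_\ell}$ and using that $1/j_\ell\ge 1/m$ so that $\eta^{1/j_\ell}\le \eta^{1/m}$ for $\eta\le 1$, each piece contributes a bound $\le C'\eta^{1/m}$; summing over $\ell$ produces $|\{v\in I:|f(v)|<\eta\}|\le NC'\eta^{1/m}$. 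The case of $\eta$ not small is absorbed into the constant since $|I|$ is bounded. When $j_\ell=0$ the corresponding set is even empty, so no issue arises.

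The only delicate point is the uniformity of $r_0$ (and hence $N$) in $\xi$; this is exactly what the definition of $c_0$ as a minimum over $\xi\in S^{d-1}$ and $v\in I$ is engineered to provide, together with the $\xi$-independent modulus of continuity of $a^{(j)}$. Everything else is a bookkeeping argument combining the elementary Lemma~\ref{l:pre-ass} with a finite cover.
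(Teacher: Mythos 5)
Your proof is correct and follows essentially the same route as the paper's: cover $I$ by finitely many intervals on each of which a fixed derivative of $\xi\cdot a$ is bounded below in absolute value, then apply Lemma~\ref{l:pre-ass} on each piece. You are in fact more explicit than the paper about the sign of $f^{(j_\ell)}$, the uniformity of the cover in $\xi$, and the exponent bookkeeping passing from $\delta^{j_\ell}$ to $\eta^{1/m}$, all of which the paper leaves implicit.
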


Note that Assumption \ref{a:hormander} implies $c_0 > 0$.

\begin{proof}
Let $\xi$ be any unit vector. Let $f(v) = 2 \xi \cdot a(v) / c_0$.  Thus, for every $v \in I$, there is some $k \in \{0,1,\dots,m\}$ so that $f^{(k)}(v) \geq 2$.

Using the fact that $f^{(k)}$ is uniformly continuous in $I$, there is an interval $I_v$ around $v$ where $f^{(k)} \geq 1$. Therefore, we can cover $I$ with a finite subcollection of intervals $I_j$, so that $f^{(k_j)} \geq 1$ in $I_j$ with $k_j \in \{0,\dots,m\}$. 
\[ I \subset \bigcup_{j=1}^N I_j.\]
The number of intervals required depends on the size of $I$ and the modulus of continuity of $a$ in $C^m$.

We apply Lemma \ref{l:pre-ass} in each $I_j$ and conclude the proof.
\end{proof}

Propositions \ref{p:ass1toass2} and \ref{p:ass2toass1} above show that Assumptions \ref{a:genuine-nonlinearity} and \ref{a:hormander} are equivalent for smooth functions $a$. In particular, $1/\alpha$ will always be an integer larger or equal than $d-1$. If we consider the function $a(v) = (1,|v|^{1/\alpha})$, it satisfies Assumption \ref{a:genuine-nonlinearity} for any $\alpha \leq 1$. This does not contradict our previous statement, since it is not a smooth function unless $1/\alpha$ is an integer.

\subsection{Notation}
\begin{itemize}
\item $J$ is always the jump set defined in \eqref{e:shock-set}.
\item Given a set $E \subset \R^d$, its topological closure is denoted by $\overline E$.
\item Given an $L^\infty$ function $u$, its upper and lower semicontinuous envelopes, given in Definition \ref{d:semicontinuous-envelopes}, are written $\overline u$ and $\underline u$.
\item Given a measurable set $E \subset \R^d$, we denote its Lebesgue measure by $|E|$.
\item For any set $E \subset \R^d$, we write $\hull{E}$ to denote its convex envelope.
\item Throughout this article, $A: \R \to \R^d$ is the function in the equation \eqref{e-conservationlaw} or \eqref{e:conservatiolaw-no-t}, and $a(v) = A'(v)$.
\item We write $u_0$ to denote the initial condition $u_0 = u(0,\cdot)$ of any solution $u$ of \eqref{e-conservationlaw}.
\item $v_+ = \max(v,0)$, $v_- = \max(-v,0)$.
\item $a^{(k)}$ denotes the $k$th derivative of the function $a$.
\end{itemize}

\section{Scaling}

\label{s:scaling}

In this section we explain a two-parameter family of scalings that leave the equation \eqref{e:conservatiolaw-no-t} invariant.

If $u$ is a solution or a subsolution to \eqref{e:conservatiolaw-no-t}, it is an immediate observation that the scaled function $u_r(x) = u(rx)$ is also a solution for any $r>0$.

The second parameter in our family of scalings will only be used for obtaining optimal decay exponent $\gamma_0$ in Theorem \ref{t:intro-decay}. It is not used for the proofs of Theorems \ref{t:intro-continuity} or \ref{t:intro-no-entropy}.

In order to introduce a second family of scalings, we need the more precise information about the nonlinearity $a(v)$ given in Assumption \ref{a:hormander}. At the point $v=0$, we know that the set of vectors $\{ a(0), a'(0), \dots, a^{(m)}(0)\}$ generate $\R^d$. Let us extract a subset of $d$ linearly independent vectors generating $\R^d$. We call them
$\{a^{(j_1)}(0), a^{(j_1)}(0),\dots,a^{(j_d)}(0)\}$. Moreover, we pick the lexicographically smallest indexes $j_i$ satisfying this condition. Naturally, since we assume that $a \in C^m$, there will be some interval $(-v_0,v_0)$ such that $\{a^{(j_1)}(v), a^{(j_1)}(v),\dots,a^{(j_d)}(v)\}$ is a basis of $\R^d$ for $v \in (-v_0,v_0)$.

For $\lambda \in (0,v_0)$, let $S_\lambda$ be the linear transformation such that for $i=1,\dots,d$,
\[
S_\lambda (a^{(j_i)}(0)) = \lambda^{j_i} a^{(j_i)}(0).
\]

Let us define 
\begin{equation} \label{e:two-parameter-scaling}
u_{r,\lambda}(x) = \lambda^{-1} u(r \, S_\lambda x).
\end{equation}

From a direct computation, we verify that if $u$ satisfies \eqref{e:conservatiolaw-no-t} (or is a subsolution), then $u_{r,\lambda}$ satisfies the same equation with $a(v)$ replaced by
$\tilde a(v) = S_\lambda^{-1} a(\lambda v)$. The choice of $S_\lambda$ was made precisely so that $\tilde a^{(j_i)}(v) = a^{(j_i)}(\lambda v)$. Therefore, if $\lambda < v_0$ and $u(x) \in (-\lambda,\lambda)$, then $u_{r,\lambda}$ will satisfy an equation for which Assumption \ref{a:hormander} with $I = [-1,1]$ is satisfied with a uniform modulus of continuity in $C^m$, and a uniform constant $c_0$ as in Proposition \ref{p:ass2toass1}. In particular, Assumption \ref{a:genuine-nonlinearity} holds uniformly in $\lambda$.

We formulated this scaling procedure and the Assumption \ref{a:hormander} in terms of stationary solutions as in \eqref{e:conservatiolaw-no-t}. For $t$-dependent equations as in \eqref{e-conservationlaw}, $a(v) = (1,\dots)$ and $a^{(j)}(v) = (0,\dots)$ for any $j\geq 1$. The time variable is not affected by the linear transformation $S_\lambda$. For example, for the generalized Burgers equation (as in \cite{crippa2008regularizing}),
\[ u_t + \sum_{k=1}^d u^k \partial_k u = 0,\]
we would have
\[ u_{r,\lambda}(t,x) = \lambda^{-1} u(rt, r\lambda x_1, r \lambda^2 x_2, \dots, r \lambda^{-d} x_d).\]
And $u_{r,\lambda}$ would satisfy the same equation as $u$.

\section{Kinetic characterization of subsolutions}
\label{s:kinetic}

The following proposition extends the kinetic characterization to entropy subsolutions. The proof is very similar to that in \cite{lions1994kinetic}. We have to do some extra work in order to restrict the domain of the measures $\mu_0$ and $\mu_1$.

\begin{prop} \label{p:kinetic-subsolutions}
Let $u$ be a bounded entropy subsolution to the conservation law \eqref{e:conservatiolaw-no-t}. Consider the function $f$ given by \eqref{e:kinetic-formulation}. This function satisfies the kinetic equation
\[ a(v) \cdot \nabla_x f =  \partial_v \mu_0 - \mu_1,\]
where $\mu_0$ and $\mu_1$ are two Radon measures supported in the set $\{ (x,v) : \underline u(x) \leq v \leq \overline u(x)\}$.
\end{prop}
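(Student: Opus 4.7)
The plan is to mimic the Lions-Perthame-Tadmor derivation of the kinetic formulation, keeping careful track of the extra mass contributed by the failure of the exact conservation law. The first step is to extract two families of auxiliary nonnegative measures on $\Omega$ by applying the subsolution condition to two specific convex, nondecreasing entropies. Taking $\eta(v)=v$ gives a nonnegative Radon measure $\mu_1^x := -\dv_x A(u)$, and taking $\eta(v)=(v-\ell)_+$ for each $\ell \in \R$ gives a nonnegative Radon measure $M_\ell := -\dv_x A(\max(u,\ell))$ on $\Omega$. Since $A$ is continuous, $\ell \mapsto M_\ell(\varphi)$ is continuous in $\ell$ for every $\varphi \in C_c^\infty(\Omega)$, a fact I will use in a Fubini argument below.

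With these in hand I set $\mu_1$ to be the pushforward of $\mu_1^x$ under the Borel map $x \mapsto (x,\underline u(x))$. Since $\underline u(x) \in [\underline u(x), \overline u(x)]$, this is automatically a nonnegative Radon measure on $\Omega \times \R$ concentrated on the graph of $\underline u$, hence supported in the prescribed strip. For $\mu_0$ I first define, for each $v \in \R$, the signed Radon measure
\[
\tilde N_v := M_v - \mathbf{1}_{v < \underline u(\cdot)}\,\mu_1^x
\]
on $\Omega$, and then take $\mu_0$ to be the Radon measure on $\Omega \times \R$ disintegrating as $d\mu_0(x,v) = d\tilde N_v(x)\,dv$. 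The measurability of $v \mapsto \tilde N_v(\varphi)$ needed for this definition follows from the continuity of $v \mapsto M_v(\varphi)$ together with the monotonicity in $v$ of the indicator.

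The main obstacle is to verify that $\tilde N_v$ is actually nonnegative and supported in the slice $\{x : \underline u(x) \leq v \leq \overline u(x)\}$; this is where the semicontinuity of the envelopes is essential. I split $\Omega$ into three regions. On the open set $\{\underline u > v\}$ (open by lower semicontinuity of $\underline u$), one has $u > v$ almost everywhere, so $\max(u,v) = u$ a.e.\ and $M_v$ agrees distributionally with $\mu_1^x$ there; the indicator is identically $1$ on this set, so the two contributions cancel. On the open set $\{\overline u < v\}$, $\max(u,v) = v$ is locally constant, hence $M_v = 0$, and the indicator also vanishes. The complement is the closed strip $\{\underline u \leq v \leq \overline u\}$, on which the indicator is zero and $\tilde N_v = M_v \geq 0$. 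This single piecewise analysis delivers simultaneously the nonnegativity of $\mu_0$ and the required support condition.

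Finally, I would verify the distributional identity $a(v) \cdot \nabla_x f = \partial_v \mu_0 - \mu_1$ by pairing both sides with product test functions $\psi(x,v) = \varphi(x) g(v)$ having $\varphi \in C_c^\infty(\Omega)$ and $g \in C_c^\infty(\R)$, which suffice by tensor-product density. After integration by parts, the identity becomes $-\dv_x q(u)(\varphi) = \mu_0(\varphi g') + \mu_1(\varphi g)$, where $q$ is the entropy flux with $q'(v) = g(v) a(v)$. A Kruzkov-style expansion $g(v) = \int_{-M}^v g'(s)\,ds$—valid because $g$ has compact support—combined with the identities $\dv_x A(u) = -\mu_1^x$ and $\dv_x A(\max(u,s)) = -M_s$ gives $-\dv_x q(u)(\varphi) = \int g'(s) M_s(\varphi)\,ds$. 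On the other side, expanding $\mu_0(\varphi g') + \mu_1(\varphi g)$ via the construction of $\mu_0$ and applying Fubini in $(v,x)$, the identity $\int g'(v) \mathbf{1}_{v < \underline u(x)}\,dv = g(\underline u(x))$ makes the two $g(\underline u)$-contributions cancel exactly, leaving the same $\int g'(s) M_s(\varphi)\,ds$. Both sides agree, establishing the kinetic identity with the correct support.
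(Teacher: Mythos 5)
Your construction is correct, and it takes a genuinely different route from the paper. The paper builds $\mu_1$ and $\mu_0$ relative to an open cover $\{G_i\}$ with a subordinate partition of unity: $\mu_1$ is a sum of terms $(-\dv A(u))\varphi_i \otimes \delta_{L_i}$ with $L_i$ the local essential infimum, positivity of $\mu_0$ comes from testing with entropies $\eta_i$ vanishing to second order at $L_i$ (using that $u \geq L_i$ on $\supp \varphi_i$), and the stated support property is only obtained after a weak-$\ast$ limit over finer and finer covers. You instead write down the limit objects in closed form: $\mu_1$ is the pushforward of $-\dv A(u)$ onto the graph of $\underline u$, and $\mu_0$ disintegrates as $\bigl(M_v - \one_{\{v < \underline u\}}(-\dv A(u))\bigr)\otimes dv$ with $M_v = -\dv A(\max(u,v))$ the Kruzkov family; nonnegativity and the support condition then follow from a pointwise-in-$v$ three-region argument that exploits the openness of $\{\underline u > v\}$ and $\{\overline u < v\}$, and the kinetic identity reduces to the exact cancellation of the two $g(\underline u)$ terms after Fubini. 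Your version avoids the limiting procedure entirely, makes the support property transparent, and as a bonus exhibits $\mu_0$ as absolutely continuous in $v$ with measure-valued density $\tilde N_v \leq M_v$, which essentially contains the estimate the paper later extracts in Lemma \ref{l:total-measure0}; the paper's cover-based construction has the advantage of being the exact object reused verbatim in the proofs of Lemmas \ref{l:total-measure1} and \ref{l:total-measure0}. The points you leave implicit (Borel measurability of $x \mapsto (x,\underline u(x))$, joint measurability of $\{(x,v): v < \underline u(x)\}$, local uniform bounds on $\|\tilde N_v\|$ justifying the disintegration and Fubini, and density of tensor-product test functions) are all routine and hold as stated.
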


\begin{proof}
The proof essentially follows the steps of the proof in \cite{lions1994kinetic}. We split the domain $B_R$ with an arbitrary open cover (defined below) to refine the information of the support of $\mu_0$ and $\mu_1$.

Let $G_i$ be an open cover of $B_R$. That means that each $G_i$ is an open set and $B_R \subset \bigcup G_i$. Let $\varphi_i$ be a partition of unit associated to this open cover. Let $L_i = \essinf_{G_i \cap B_R} f$ and $U_i = \essinf_{G_i \cap B_R} f$.

Since $u$ is a subsolution $-a(u(x)) \cdot \nabla u(x)$ is a measure. We define $\mu_1$ as $-\sum_i (a(u(x)) \cdot \nabla u(x)) \varphi_i(x) \otimes \delta_{L_i} (v)$. That is, for any $C^1$ function $g: B_R \times \R$, we set
\[ \int_{B_R \times \R} g \dd \mu_1 = \sum_i \int_{B_R} A(u(x)) \cdot \nabla [\varphi_i(x) g(x,L_i)] \dd x.\]

Let $T \in \mathcal{D}'(B_R \times \R)$ be the distribution of order at most one defined by
\[ T = a(v) \cdot \nabla_x f + \mu_1.\]

We define $\mu_0$ as a distribution with the following formula
\[ \langle \mu_0 , g \rangle := - \sum_i \left\langle T , \varphi_i(x) \int_{L_i}^v g(x,w) \dd w \right\rangle, \qquad \text{for every } g \in \mathcal D(B_R \times \R).\]
Note that by the construction of $\mu_1$, we have $\langle T , g \otimes 1 \rangle = 0$ for any $g \in \mathcal{D}(B_R)$. This implies that $\mu_0$ is supported in $\bigcup_i G_i \times [L_i, U_i]$.

In order to show that $\mu_0$ is a measure, we need to verify that $\langle \mu_0 , g \rangle \geq 0$ for any $g(x,v) \geq 0$. It is enough to test with functions of the form $g(x) \psi(v) \geq 0$ (since these generate the whole space $\mathcal D$).

Let $\eta_i$ be the function such that $\eta_i''(v) = \psi(v)$, $\eta_i(L_i) = \eta_i'(L_i) = 0$. Since $\psi \geq 0$, $\eta_i$ is convex and it is non-decreasing in $[L_i,+\infty)$.

We have
\begin{align*} 
 \langle \mu_0 , g \otimes \psi \rangle &= - \sum_i \left\langle T ,  \varphi_i(x) g(x) \int_{L_i}^v \psi(w) \dd w \right\rangle, \\
&= - \sum_i \left\langle T , \varphi_i(x) g(x) \eta_i'(v) \right\rangle, \\
&= \sum_i \left( \iint f(x,v) a(v) \cdot \nabla [\varphi_i(x) g(x)] \eta_i'(v) \dd v \dd x - \int A(u(x)) [\varphi_i(x) g(x)] \eta_i'(L_i) \dd v \right), \\
&= \sum_i \iint f(x,v) a(v) \cdot \nabla [\varphi_i(x) g(x)] \eta_i'(v) \dd v \dd x = \sum_i \int_{B_R} q_i(u(x)) \cdot \nabla [\varphi_i(x) g(x)] \dd x \geq 0.
\end{align*}
We used that $\eta_i'(L_i)=0$. The last inequality follows by the definition of entropy subsolution. Note that $\eta_i$ is non-decreasing only in $[L_i,+\infty)$, but $u$ does not take values below $L_i$ in the support of $\varphi_i$, and so the last inequality holds.

With this construction, we obtain two measures $\mu_0$ and $\mu_1$ so that $a(v) \cdot \nabla f = \partial_v \mu_0 - \mu_1$ and they are supported in $\bigcup_i G_i \times [L_i,U_i]$. In order to obtain a pair of measures $\mu_0$ and $\mu_1$ supported in $\{ (x,v) : \underline u(x) \leq v \leq \overline u(x)\}$, we use a sequence of open covers with balls of radius $r$ and pass to the weak-$\ast$ limit as $r \to 0$.
\end{proof}

The following two lemmas provide the most basic estimates on the total measure of $\mu_0$ and $\mu_1$. The first one (Lemma \ref{l:total-measure1}) is a simple consequence of integrating the equation with a suitable test function. The second one (Lemma \ref{l:total-measure0}) is analogous to an estimate in \cite{lions1994kinetic}.

\begin{lemma} \label{l:total-measure1}
Let $u \geq 0$ be an entropy subsolution to the conservation law \eqref{e:conservatiolaw-no-t} in $B_{1+\delta}$. Let $\mu_1$ be the measure as in Proposition \ref{p:kinetic-subsolutions}. Then
\[ \mu_1(B_{1} \times \R) \leq \left( \max |a| \right)  \delta^{-1} \|u\|_{L^1(B_{1+\delta})} .\]
\end{lemma}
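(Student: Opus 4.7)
The plan is to exploit the fact that when $\mu_1$ is integrated against a function depending only on $x$, it behaves like the nonnegative measure $-\dv A(u)$ on $B_{1+\delta}$, and then test against a Lipschitz cutoff, using $u\geq 0$ to bound $A(u)$ by $(\max|a|)u$.

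First, I would extract from the construction in Proposition \ref{p:kinetic-subsolutions} the identity
\[ \int \psi(x) \, d\mu_1(x,v) = \int A(u(x)) \cdot \nabla \psi(x) \, dx \]
for every $\psi \in C_c^\infty(B_{1+\delta})$. Indeed, that proof defines $\mu_1$ through $\int g \, d\mu_1 = \sum_i \int A(u) \cdot \nabla[\varphi_i(x)\, g(x,L_i)] \, dx$; specializing to $g(x,v) = \psi(x)$ (independent of $v$) and using $\sum_i \varphi_i \equiv 1$ on $\supp \psi$ gives the identity, and it passes to the weak-$\ast$ limit as the cover is refined. Equivalently, one can integrate the kinetic equation $a(v)\cdot\nabla_x f = \partial_v \mu_0 - \mu_1$ in $v$: the $\partial_v\mu_0$ term vanishes because $\mu_0$ has compact $v$-support, and $\int a(v) f(x,v)\,dv = A(u(x)) - A(0)$ since $u \geq 0$, so the $x$-projection of $\mu_1$ is $-\dv (A(u) - A(0)) = -\dv A(u)$.

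Next, I would pick the obvious Lipschitz cutoff $\psi(x) = \max\{0, \min\{1, (1+\delta - |x|)/\delta\}\}$, satisfying $\psi \equiv 1$ on $B_1$, $\supp \psi \subset \overline{B_{1+\delta}}$, and $|\nabla \psi| \leq \delta^{-1}$ a.e. (a standard mollification takes care of smoothness). Since $\mu_1 \geq 0$,
\[ \mu_1(B_1 \times \R) \leq \int \psi \, d\mu_1 = \int A(u) \cdot \nabla \psi \, dx. \]
Since $\int \nabla \psi \, dx = 0$ by compact support, we may replace $A(u)$ by $A(u) - A(0)$ in the last integral; and since $u \geq 0$, $|A(u(x)) - A(0)| = \bigl|\int_0^{u(x)} a(v)\,dv\bigr| \leq (\max |a|)\, u(x)$. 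Combining with $|\nabla \psi| \leq \delta^{-1}$ gives
\[ \mu_1(B_1 \times \R) \leq (\max|a|)\, \delta^{-1} \|u\|_{L^1(B_{1+\delta})}. \]

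The only non-trivial step is the first one, identifying the $x$-projection of $\mu_1$; but this is really already contained in the construction of Proposition \ref{p:kinetic-subsolutions}. The rest is routine cutoff-and-integrate.
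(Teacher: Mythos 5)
Your proof is correct and follows essentially the same route as the paper: the paper likewise tests the kinetic equation against the $v$-independent Lipschitz cutoff, uses that the $\partial_v\mu_0$ term drops out, identifies $\int f(x,v)a(v)\,\mathrm{d}v = A(u(x))-A(0)$, and bounds $|A(u)-A(0)|\leq (\max|a|)\,u$ using $u\geq 0$. Your additional remark that the $x$-projection identity can also be read off directly from the construction of $\mu_1$ is a harmless elaboration of the same argument.
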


\begin{proof}
We integrate the equation in Proposition \ref{p:kinetic-subsolutions} against the following test function, which is independent of $v$,
\[ g(x) = \begin{cases}
1 &\text{if } x \in B_1,\\
1 - \delta^{-1} (|x|-1) &\text{if } x \in B_{1+\delta} \setminus B_1, \\
0 &\text{if } |x| > 1+\delta.
\end{cases}\]
Thus,
\begin{align*} 
 \mu_1(B_1 \times \R) &\leq \int g \dd \mu_1 , \\ 
 &= \int_{B_{1+\delta} \times \R} f(x,v) a(v) \cdot \nabla g(x) \dd v \dd x, \\
&= \int_{B_{1+\delta}} [A(u(x)) - A(0)] \cdot \nabla g(x) \dd x, \\
&= \int_{B_{1+\delta}} (\max |a|) u(x) |\nabla g(x)| \dd x, \\
&\leq \frac{(\max |a|)}\delta \|u\|_{L^1(B_{1+\delta} \setminus B_1)}.
\end{align*}
\end{proof}

\begin{lemma} \label{l:total-measure0}
Let $u \geq 0$ be an entropy sub-solution to the conservation law \eqref{e:conservatiolaw-no-t} in $B_{1+\delta}$. The measure $\mu_0$ in Proposition \ref{p:kinetic-subsolutions} can be constructed so that there is a function $m \in L^\infty(\R) \cap L^1(\R)$ such that for any continuous function $\psi:\R \to \R$
\begin{equation} \label{e:tm0}
 \int_{B_{1} \times \R} \psi(v) \dd \mu_0 = \int_{\R} \psi(v) m(v) \dd v,
\end{equation}
and the function $m$ satisfies
\[ \|m\|_{L^\infty} \leq \left( \max |a| \right)  \delta^{-1} \|u\|_{L^1(B_{1+\delta})} .\]
Moreover, $m$ is supported in the interval $[0, \|u\|_{L^\infty}]$. 

Equivalently, for any $v \in [0,\infty)$ and $r>0$,
\[ \mu_0(B_1 \times [v,v+r]) \leq ( \max |a| )  \delta^{-1} r \|u\|_{L^1(B_{1+\delta})}.\]
\end{lemma}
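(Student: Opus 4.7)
The plan is to test the kinetic identity $a(v)\cdot \nabla_x f = \partial_v \mu_0 - \mu_1$ from Proposition~\ref{p:kinetic-subsolutions} against a product test function $\Phi(x,v) = g(x)\chi(v)$, where $g$ is the same radial cutoff used in the proof of Lemma~\ref{l:total-measure1} (so $g \equiv 1$ on $B_1$, $\supp g \subset B_{1+\delta}$, and $|\nabla g| \le \delta^{-1}$), and $\chi$ is a smooth, nondecreasing approximation of the ramp function that is $0$ for $v \le v_0$, rises linearly from $0$ to $1$ on $[v_0, v_0+r]$, and equals $1$ afterward. Since $u \ge 0$ and the measures $\mu_0, \mu_1$, the function $f$, and the density $m$ are all supported in $\{0 \le v \le \|u\|_{L^\infty}\}$, I can truncate $\chi$ just past $\|u\|_{L^\infty}$ to make it compactly supported without changing any of the pairings that appear.

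Pushing the $v$-derivative onto $\Phi$, using $\partial_x$ by parts on the left, and rearranging yields
\[
 \int g(x)\,\chi'(v)\,\dd\mu_0(x,v) = \int_{B_{1+\delta}} \int_\R f(x,v)\,a(v)\,\chi(v)\cdot \nabla g(x)\,\dd v\,\dd x - \int g(x)\,\chi(v)\,\dd\mu_1.
\]
The $\mu_1$ term is nonnegative and appears with a minus sign, so it can be dropped to get an upper bound. For the bulk term I use $f(x,\cdot) = \one_{(0,u(x))}$, $\chi \le 1$, and $|a| \le \max|a|$ to estimate the inner $v$-integral by $(\max|a|)\,u(x)$, which together with $|\nabla g| \le \delta^{-1}$ gives
\[
 \int g(x)\,\chi'(v)\,\dd \mu_0 \le (\max|a|)\,\delta^{-1}\,\|u\|_{L^1(B_{1+\delta})}.
\]
Letting the approximation $\chi'$ tend monotonically to $r^{-1}\one_{[v_0,v_0+r]}$ and using $g \ge \one_{B_1}$ together with the monotone convergence theorem for the nonnegative Radon measure $\mu_0$, I obtain the interval estimate
\[
 \mu_0\bigl(B_1 \times [v_0, v_0+r]\bigr) \le (\max|a|)\,\delta^{-1}\,r\,\|u\|_{L^1(B_{1+\delta})}
\]
for every $v_0 \in \R$ and every $r > 0$.

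This interval bound says exactly that the pushforward $m(E) := \mu_0(B_1 \times E)$ is a nonnegative Radon measure on $\R$ whose value on every interval of length $r$ is at most $(\max|a|)\delta^{-1}r\|u\|_{L^1(B_{1+\delta})}$. By the Radon--Nikodym theorem, $m$ is absolutely continuous with respect to Lebesgue measure with a density (still denoted $m$) satisfying the claimed $L^\infty$ bound, and by monotone convergence this density reproduces $\int_{B_1\times\R}\psi\,\dd\mu_0$ by integration against continuous $\psi$. The support statement $\supp m \subset [0,\|u\|_{L^\infty}]$ is inherited from the support of $\mu_0$ in $\{(x,v): \underline u(x) \le v \le \overline u(x)\}$ together with $0 \le u \le \|u\|_{L^\infty}$ a.e.; the $L^1$ bound on $m$ then follows from $\|m\|_{L^1} \le \|u\|_{L^\infty}\|m\|_{L^\infty}$.

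The main technical nuisance, rather than a genuine obstacle, is that one would like to integrate against the non-smooth test $\one_{B_1}(x)\,\one_{[v_0,v_0+r]}(v)$ directly, while $\mu_0$ is only a distribution obtained by a weak-$\ast$ limit in Proposition~\ref{p:kinetic-subsolutions}. This is handled by smooth approximation: use $g$ and $\chi$ smooth with $\chi$ compactly supported (valid because $f,\mu_0,\mu_1$ all vanish for $v > \|u\|_{L^\infty}$), establish the estimate for the smooth pairings, and then pass to the limit using only monotone convergence for the nonnegative measure $\mu_0$ and dominated convergence for the $f$-integral against $a(v)\chi(v)\cdot \nabla g$.
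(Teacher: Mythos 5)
Your proof is correct, but it takes a genuinely different route from the paper's. The paper does not use the kinetic identity as a finished object: it goes back to the explicit construction of $\mu_0$ in Proposition \ref{p:kinetic-subsolutions} (the partition of unity $\varphi_i$, the base levels $L_i$, and the entropy fluxes $q_i$), computes $\langle \mu_0, g(x)\psi(v)\rangle$ for $\psi \geq 0$ by exchanging the order of integration, and invokes the entropy subsolution inequality at each level $w$ to replace the lower limit $L_i$ by $0$, which produces an explicit majorant $m(w)=\int \one_{u>w}[A(u)-A(w)]\cdot\nabla g\,\dx$ for the density; the bound is established for each open cover and survives the weak-$\ast$ limit. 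You instead treat $a(v)\cdot\nabla_x f=\partial_v\mu_0-\mu_1$ as a black box, pair it with $g(x)\chi(v)$ for a ramp $\chi$, discard the term $-\int g\chi\,\dd\mu_1\leq 0$, and bound the remaining transport term by $(\max|a|)\,\delta^{-1}\|u\|_{L^1}$. This is shorter and applies to \emph{any} pair of nonnegative measures $(\mu_0,\mu_1)$ with the support property of Proposition \ref{p:kinetic-subsolutions}, not only the one built there; it even clarifies the remark following the lemma, since a competing decomposition $(\tilde\mu_0,\tilde\mu_1)$ could only violate the estimate by losing the sign or the support-in-$v$ condition (e.g.\ $\tilde\mu_0=\mu_0+\nu\otimes \dd v$). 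The one place where you genuinely need the support information is the truncation of $\chi$ above $\|u\|_{L^\infty}$, where $\chi'<0$ and you must know $\mu_0$ carries no mass --- you correctly flag this. The remaining technicalities (mollifying $g$, which is Lipschitz and not compactly supported in the open ball $B_{1+\delta}$) are shared with the paper's own argument and are routine.
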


The measures $\mu_0$ and $\mu_1$ in Proposition \ref{p:kinetic-subsolutions} are not necessarily unique. In Lemma \ref{l:total-measure0} we obtain an estimate which works for the measure $\mu_0$ which was constructed in the proof of Proposition \ref{p:kinetic-subsolutions}. It is not ruled out that there may be other two measures $\tilde \mu_0$ and $\tilde \mu_1$ so that $\partial_v \mu_0 - \mu_1 = \partial_v \tilde \mu_0 - \tilde \mu_1$ and $\tilde \mu_0$ does not satisfy the estimate \eqref{e:tm0}.

\begin{proof}
We use the same construction for $\mu_0$ as in the proof of Proposition \ref{p:kinetic-subsolutions} and verify that it satisfies \eqref{e:tm0}.

By splitting $\psi$ into its positive and negative parts, it is enough to prove the result assuming $\psi \geq 0$. 

Let $g : B_{1+\delta} \to \R$ be the same test function as in the proof of Lemma \ref{l:total-measure1}. The result of this lemma is a simple consequence of the following inequality
\begin{equation} \label{e:tm01}
 \langle \mu_0, g(x) \psi(v) \rangle \leq (\max |a|) \delta^{-1} \|u\|_{L^1} \|\psi\|_{L^1}.
\end{equation}
Thus, we concentrate the rest of the proof in verifying \eqref{e:tm01}

We consider an open covering of $B_{1+\delta}$ and a partition of unity $\{\varphi_i\}$ as in the proof of Proposition \ref{p:kinetic-subsolutions}. Following the same construction of $\mu_0$ relative to this open cover, we obtain
\[ \langle \mu_0, g(x) \psi(v) \rangle = \sum_i \int_{B_{1+\delta}} q_i(u(x)) \cdot [\varphi_i(x) g(x) ] \dd x.\]
Recall that $\eta_i$ is the function such that $\eta_i(L_i) = \eta_i'(L_i)=0$ and $\eta_i'' = \psi$. Moreover, $q_i$ is the function such that $q_i(L_i) = 0$ and $q'(v) = \eta_i'(v) a(v)$. Thus,
\begin{align*} 
 \langle \mu_0, g(x) \psi(v) \rangle &= \sum_i \int_{B_{1+\delta}} \left( \int_{L_i}^{u(x)} a(v) \int_{L_i}^v \psi(w) \dd w \dd v \right) \cdot\nabla [\varphi_i(x) g(x) ] \dd x, \\
\intertext{Exchanging the order of integration,}
&= \sum_i  \int_{L_i}^{\infty} \psi(w) \left( \int_{B_{1+\delta}} \one_{u(x) > w} [A(u(x)) - A(w)]  \cdot  \nabla [\varphi_i(x) g(x) ] \dd x \right) \dd w,\\
\intertext{Since $u$ is a subsolution, the innermost integral is positive for any $w \in \R$, then we get an inequality,}
&\leq \sum_i  \int_{0}^{\infty} \psi(w) \left( \int_{B_{1+\delta}} \one_{u(x) > w} [A(u(x)) - A(w)]  \cdot  \nabla [\varphi_i(x) g(x) ] \dd x \right) \dd w,\\
&=  \int_{0}^{\infty} \psi(w) \left( \sum_i \int_{B_{1+\delta}} \one_{u(x) > w} [A(u(x)) - A(w)]  \cdot  \nabla [\varphi_i(x) g(x) ] \dd x \right) \dd w,\\
&=  \int_{0}^{\infty} \psi(w) \left( \int_{B_{1+\delta}} \one_{u(x) > w} [A(u(x)) - A(w)]  \cdot  \nabla g(x)  \dd x \right) \dd w,\\
\end{align*}

Setting 
\[ m(w) = \int_{B_{1+\delta}} \one_{u(x) > w} [A(u(x)) - A(w) \cdot  \nabla g(x)  \dd x,\]
we clearly have $\|m\|_{L^\infty} \leq (\max |a|) \delta^{-1} \|u\|_{L^1} $.

This estimate holds for any open cover $\{G_i\}$ and therefore it will hold after passing to the limit.
\end{proof}

\section{Averaging estimates for subsolutions}
\label{s:averaging}

The following is a relatively standard averaging Lemma. We can find its proof for example in Theorem A in \cite{lions1994kinetic} or Lemma 2.1 in \cite{tadmor2007velocity}.

\begin{prop} \label{p:averaging}
Assume that the nonlinearity satisfies Assumption \ref{a:genuine-nonlinearity}. There exists a constant $\theta>0$ depending on $\alpha$, such that for any $s \in (0,\theta)$ and $1/r = (1+\theta)/2$, the following estimate holds. 

Given any nonnegative subsolution $u$ of \eqref{e:conservatiolaw-no-t} in certain domain $\Omega$, let $f$, $\mu_0$ and $\mu_1$ be as in Proposition \ref{p:kinetic-subsolutions}. Let $\varphi: \R^d \to \R$ be smooth and supported inside a ball of radius less or equal to $2$ contained in $\Omega$. Then
\[ \|u \varphi\|_{W^{s,r}} \leq C \|f(x,v) \varphi(x)\|_{L^2}^{1-\theta} \left( \int \varphi(x) \dd \mu_0 + \int \varphi(x) \dd \mu_1 + \int |a(v) \cdot \nabla \varphi(x)| f(x,v) \dd x \dd v \right)^\theta,\]
where $C$ is a constant depending only on  $\|u\|_{L^\infty}$, $s$, the constants $\alpha$ and $C$ in Assumption \ref{a:genuine-nonlinearity} and $\|a\|_{C^1(I)}$
\end{prop}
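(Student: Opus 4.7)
The proof reduces to applying a standard velocity averaging lemma to a localized version of the kinetic equation from Proposition \ref{p:kinetic-subsolutions}. The plan has three steps: first, derive an equation for $f\varphi$; second, verify the hypotheses of a standard averaging estimate; third, identify the velocity integral with $u\varphi$.

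Since $\varphi$ depends only on $x$, multiplying the kinetic equation $a(v)\cdot\nabla_x f = \partial_v\mu_0 - \mu_1$ by $\varphi$ yields
\begin{equation*}
a(v)\cdot\nabla_x(f\varphi) = \partial_v(\varphi\,\mu_0) - \varphi\,\mu_1 + f\,a(v)\cdot\nabla\varphi.
\end{equation*}
The three terms on the right-hand side are the $v$-derivative of a measure of total mass $\int\varphi\,d\mu_0$, a measure of total mass $\int\varphi\,d\mu_1$, and an $L^1$ source of total mass bounded by $\int |a(v)\cdot\nabla\varphi|\,f\,dx\,dv$. These are precisely the three quantities in the statement. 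The function $f\varphi$ is bounded pointwise by $1$, compactly supported in $x$, and supported in $v\in[-\|u\|_\infty,\|u\|_\infty]$, hence it belongs to $L^2$. With these verifications in place I would invoke Theorem A of \cite{lions1994kinetic}, or equivalently Lemma 2.1 of \cite{tadmor2007velocity}, with exponents depending on $\alpha$ in Assumption \ref{a:genuine-nonlinearity}: this gives
\begin{equation*}
\left\| \int (f\varphi)(\cdot,v)\,dv\right\|_{W^{s,r}(\R^d)} \leq C\,\|f\varphi\|_{L^2}^{1-\theta}\,N^{\theta},
\end{equation*}
where $N$ denotes the sum of the three RHS norms above. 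Since $u\geq 0$, we have $u(x)\varphi(x) = \int f(x,v)\varphi(x)\,dv$, so the left-hand side equals $\|u\varphi\|_{W^{s,r}}$, and the claim follows.

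The main technical content lies inside the cited averaging lemma, not in the reduction. The key mechanism, which I would not redo, is the Fourier split in $x$ originating in \cite{golse1988}: for each frequency $\xi$ one decomposes the $v$-integral into $\{v:|a(v)\cdot\xi|\leq\delta\}$ and its complement, with threshold $\delta=\delta(|\xi|)$. On the bad set, Assumption \ref{a:genuine-nonlinearity} bounds the Lebesgue measure by $\delta^\alpha$, which combined with the $L^2$ mass of $f\varphi$ controls one half of the estimate; on the good set one inverts the symbol $a(v)\cdot\xi$ and bounds $\widehat{f\varphi}$ by the Fourier transform of the source divided by $\delta$. Optimizing in $\delta$ yields the Sobolev gain $s$ and the interpolation exponent $\theta$. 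The only obstacle specific to this setting is that the source contains the term $\partial_v(\varphi\mu_0)$, which is only a derivative of a measure rather than an $L^2$ function; this is why the estimate must be in $W^{s,r}$ with $r=2/(1+\theta)<2$ rather than in an $L^2$-based Sobolev space, and it is handled by the $L^p$-version of the averaging lemma in \cite{tadmor2007velocity}.
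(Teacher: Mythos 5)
Your proposal is correct and matches the paper's treatment: the paper gives no proof of this proposition, simply citing Theorem A of \cite{lions1994kinetic} and Lemma 2.1 of \cite{tadmor2007velocity}, which is exactly the reduction you carry out (localize the kinetic equation by $\varphi$, identify the three source terms, note $u\varphi=\int f\varphi\,\dd v$, and invoke the standard averaging lemma). Your extra sketch of the Fourier good/bad-set mechanism and the remark on why the measure source forces $r=2/(1+\theta)<2$ are accurate but not part of the paper's argument, which delegates all of that to the cited references.
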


Applying this averaging estimate in the case of conservation laws, we deduce some fractional Sobolev regularity and improved order of integrability in the following corollary. The estimate in the following corollary is by no means optimal. However, for the purpose of the proofs in this paper, the optimal regularity exponents are not necessary.

\begin{cor} \label{c:averaging}
Assume that the nonlinearity satisfies Assumption \ref{a:genuine-nonlinearity}.  Let $u$ be a non-negative entropy subsolution of \eqref{e:conservatiolaw-no-t} in $B_R$ with $R \leq 2$. Let $r<R$. Then
\[ \|u\|_{L^p(B_r)} \leq C \|u\|_{L^1(B_R)}^{(1+\theta)/2} (R-r)^{-\theta},\]
for any $p \geq 1$ so that $1/p > (1+\theta)/2 - \theta/d$. Like in Proposition \ref{p:averaging}, $\theta>0$ depends on $\alpha$ and the constant $C$ depends on $\|u\|_{L^\infty}$, $p$ and the parameters in Assumption \ref{a:genuine-nonlinearity}.
\end{cor}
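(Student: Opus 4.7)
The strategy is a direct application of the averaging estimate in Proposition \ref{p:averaging} to a suitable cutoff of $u$, followed by Sobolev embedding. I choose a smooth cutoff $\varphi$ with $\varphi = 1$ on $B_r$, supported in $B_{\tilde r}$ where $\tilde r = (r+R)/2$, with $0 \leq \varphi \leq 1$ and $|\nabla \varphi| \lesssim (R-r)^{-1}$. Since $\tilde r \leq R \leq 2$, the support of $\varphi$ sits in a ball of radius at most $2$ inside the domain, as required.

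Plugging $\varphi$ into Proposition \ref{p:averaging} with $1/r_0 = (1+\theta)/2$ and $s \in (0,\theta)$, I need to bound each of the four ingredients on the right-hand side:
\begin{itemize}
\item $\|f\varphi\|_{L^2}^2 = \int \varphi(x)^2 \int |f(x,v)|^2 \dd v \dd x = \int \varphi(x)^2 u(x) \dd x \leq \|u\|_{L^1(B_R)}$, since $f \in \{0,1\}$ and $\int f(x,v)\dd v = u(x)$.
\item $\int \varphi \dd \mu_1 \leq \mu_1(B_{\tilde r}\times \R)$, and applying Lemma \ref{l:total-measure1} to the concentric pair $(B_{\tilde r}, B_R)$ (with thickness $(R-r)/2$) gives a bound $C(R-r)^{-1}\|u\|_{L^1(B_R)}$.
\item $\int \varphi \dd \mu_0 \leq \mu_0(B_{\tilde r}\times [0,\|u\|_{L^\infty}])$; Lemma \ref{l:total-measure0} applied in the same way yields $C(R-r)^{-1}\|u\|_{L^\infty}\|u\|_{L^1(B_R)}$, and the $\|u\|_{L^\infty}$ factor is absorbed into $C$.
\item The inhomogeneous term $\int |a(v)\cdot\nabla\varphi| f \dd x \dd v \leq \|a\|_\infty \|\nabla\varphi\|_\infty \|u\|_{L^1(B_R)} \leq C(R-r)^{-1}\|u\|_{L^1(B_R)}$.
\end{itemize}
The three bracketed terms thus combine into $C(R-r)^{-1}\|u\|_{L^1(B_R)}$, and the averaging estimate becomes
\[
\|u\varphi\|_{W^{s,r_0}} \leq C \|u\|_{L^1(B_R)}^{(1-\theta)/2} \bigl( (R-r)^{-1}\|u\|_{L^1(B_R)}\bigr)^\theta = C\|u\|_{L^1(B_R)}^{(1+\theta)/2}(R-r)^{-\theta}.
\]

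Finally I invoke the Sobolev embedding $W^{s,r_0}(\R^d) \hookrightarrow L^p(\R^d)$, which holds when $1/p \geq 1/r_0 - s/d = (1+\theta)/2 - s/d$. Since the averaging proposition permits any $s \in (0,\theta)$, taking $s$ close enough to $\theta$ covers every $p$ with $1/p > (1+\theta)/2 - \theta/d$; using $u\varphi = u$ on $B_r$ yields the stated estimate $\|u\|_{L^p(B_r)} \leq C \|u\|_{L^1(B_R)}^{(1+\theta)/2}(R-r)^{-\theta}$. I do not expect any real obstacle here: the only point requiring a bit of care is noting that Lemmas \ref{l:total-measure0} and \ref{l:total-measure1}, although stated on $B_1$ vs.\ $B_{1+\delta}$, apply verbatim to any concentric pair of balls because their proofs depend only on the existence of a cutoff with gradient of size $\delta^{-1}$ between the two balls.
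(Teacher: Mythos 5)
Your proposal is correct and follows essentially the same route as the paper: a cutoff between $B_r$ and $B_R$, the bound $\|f\varphi\|_{L^2}\leq\|u\varphi\|_{L^1}^{1/2}$ (valid since $f\in\{0,1\}$ for $u\geq 0$), the total-mass estimates of Lemmas \ref{l:total-measure0} and \ref{l:total-measure1} for the measure terms, and finally the Sobolev embedding $W^{s,r}\hookrightarrow L^p$ with $s$ close to $\theta$. The only differences are cosmetic (supporting $\varphi$ in $B_{(r+R)/2}$ rather than $B_R$, and spelling out the rescaling of the two lemmas to concentric pairs, which the paper leaves implicit).
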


\begin{proof}
Let $\varphi: \R^d \to \R$ be smooth, supported in $B_R$, equal to one in $B_r$, with $|\nabla \varphi| \leq (R-r)^{-1}$. Note that since $0 \leq f(x,v) \varphi(x) \leq 1$ for every $(x,v) \in \Omega \times \R$, we have
\[ \| f(x,v) \varphi(x) \|_{L^2} \leq \| f(x,v) \varphi(x) \|_{L^1}^{1/2} = \|u \varphi \|_{L^1}^{1/2}.\]
We apply Proposition \ref{p:averaging} with this $\varphi$, using the estimates on $\mu_0$ and $\mu_1$ given in Lemmas \ref{l:total-measure0} and \ref{l:total-measure1}. We obtain
\[ \|\varphi u\|_{W^{s,r}(\R^d)} \leq C \|u(x)\|_{L^1(B_R)}^{(1+\theta)/2} (R-r)^{-\theta},\]
for some constant $C$ depending on $\|u\|_{L^\infty}$. Then, we apply the Sobolev inequality $W^{s,r} \subset L^p$.
\end{proof}

We now give another corollary which follows immediately from the previous one using the H\"older's inequality: $\|u\|_{L^1(B_r)} \leq \|u\|_{L^p(B_r)} |\{u>0\} \cap B_r|^{1/p'}$.

\begin{cor} \label{c:averaging2}
Assume that the nonlinearity satisfies Assumption \ref{a:genuine-nonlinearity}. Let $u$ be a non-negative entropy subsolution of \eqref{e:conservatiolaw-no-t} in $B_R$ with $R \leq 2$. Let $r<R$. Then the following inequality holds
\[ \|u\|_{L^1(B_r)} \leq C (R-r)^{-\theta} \|u\|_{L^1(B_R)}^{(1+\theta)/2} \left\vert \{u>0\} \cap B_r \right\vert^{1/p'}.\]
Here $p'$ is any positive number so that $1/p' < (1-\theta)/2 + \theta/d$ and $\theta>0$ depends on $\alpha$.  The constant $C$ depends on $\|u\|_{L^\infty}$, $p'$ and the parameters in Assumption \ref{a:genuine-nonlinearity}.
\end{cor}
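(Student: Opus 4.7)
The plan is to use precisely the H\"older inequality quoted in the statement to reduce the estimate to Corollary \ref{c:averaging}. Since $u \geq 0$, we have $u = u \, \one_{\{u>0\}}$ almost everywhere, so for any conjugate pair $1/p + 1/p' = 1$ with $p \in [1,\infty)$,
\[
\|u\|_{L^1(B_r)} = \int_{B_r} u \, \one_{\{u>0\}} \dd x \leq \|u\|_{L^p(B_r)} \, \bigl|\{u>0\}\cap B_r\bigr|^{1/p'}.
\]

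Next I would match the exponents so that Corollary \ref{c:averaging} applies. Given $p'$ satisfying $1/p' < (1-\theta)/2 + \theta/d$, the conjugate exponent $p$ defined by $1/p = 1 - 1/p'$ satisfies
\[
\frac{1}{p} = 1 - \frac{1}{p'} > 1 - \frac{1-\theta}{2} - \frac{\theta}{d} = \frac{1+\theta}{2} - \frac{\theta}{d},
\]
which is exactly the admissibility condition for $p$ in Corollary \ref{c:averaging}. Applying that corollary gives
\[
\|u\|_{L^p(B_r)} \leq C(R-r)^{-\theta} \|u\|_{L^1(B_R)}^{(1+\theta)/2},
\]
and inserting this bound into the H\"older inequality above yields the desired estimate.

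There is essentially no obstacle here beyond checking the exponent arithmetic; the only point that requires a moment of care is to confirm that the condition $1/p' < (1-\theta)/2 + \theta/d$ in the statement is exactly the dual of the condition $1/p > (1+\theta)/2 - \theta/d$ in Corollary \ref{c:averaging}, which it is. The dependence of the constant $C$ on $\|u\|_{L^\infty}$, on $p'$ (equivalently $p$), and on the parameters in Assumption \ref{a:genuine-nonlinearity} is inherited directly from Corollary \ref{c:averaging}.
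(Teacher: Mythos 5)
Your proposal is correct and is exactly the paper's argument: the paper derives Corollary \ref{c:averaging2} from Corollary \ref{c:averaging} via the same H\"older inequality $\|u\|_{L^1(B_r)} \leq \|u\|_{L^p(B_r)} |\{u>0\}\cap B_r|^{1/p'}$, leaving the exponent arithmetic implicit. Your explicit check that $1/p' < (1-\theta)/2 + \theta/d$ is dual to $1/p > (1+\theta)/2 - \theta/d$ is accurate and fills in the only detail the paper omits.
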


The precise value of $\theta$ in Corollary \ref{c:averaging2} is not important. The key point in order to make De Giorgi iteration work out is that the exponents $(1+\theta)/2$ and $1/p'$ add up to a quantity larger than one.

The special upper bound ``$R \leq 2$'' is not essential. However, considering larger supports for the function would affect the constants in the right hand side of the averaging estimates.


\section{De Giogi iteration}
\label{s:deGiorgi}

In this section we obtain a bound on the oscillation of the solution $u$ in some ball $B_1$, in terms of its $L^1$ norm in a larger ball $B_2$. This estimate is the key for the proofs of all the other results in this paper. The proof is based on an iteration procedure inspired by the method of De Giorgi \cite{e1957sulla}.

An immediate consequence of the following Theorem is that if a uniformly bounded sequence of solutions to \eqref{e:conservatiolaw-no-t} converges to a constant in $L^1_{loc}$, then it also converges in $L^\infty_{loc}$.

\begin{thm} \label{t:degiorgi}
Assume that the nonlinearity satisfies the assumption \ref{a:genuine-nonlinearity} and let $M$ be a nonnegative number.  There are constants $\gamma>0$ and $C$ so that for any non-negative entropy subsolution $u$ of \eqref{e:conservatiolaw-no-t} in $B_2$, with $\|u\|_{L^\infty(B_2)} \leq M$, the following estimate holds
\[ \|u\|_{L^\infty(B_1)} \leq C \|u\|_{L^1(B_2)}^{\gamma}.\]
The constant $\gamma$ depends on $\alpha$ and $d$, $C$ depends on the constants involved in Assumption \ref{a:genuine-nonlinearity} and $M$.
\end{thm}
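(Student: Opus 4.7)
The plan is a De Giorgi-style iteration on truncations of $u$, in which Corollary \ref{c:averaging2} plays the role that the Caccioppoli plus Sobolev combination plays in the classical elliptic argument. Fix shrinking radii $r_k := 1 + 2^{-k}$ and increasing thresholds $\ell_k := L(1 - 2^{-k})$, with $L \in (0, M]$ to be chosen at the end, and set $u_k := (u - \ell_k)_+$. By Lemma \ref{l:max-of-subsolutions}, $\max(u,\ell_k)$ is an entropy subsolution of the original equation; subtracting the constant $\ell_k$ then shows $u_k$ is a nonnegative entropy subsolution of a translated conservation law with nonlinearity $\tilde a_k(v) := a(v + \ell_k)$. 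Because $0 \leq \ell_k \leq M$, the family $\{\tilde a_k\}$ satisfies Assumption \ref{a:genuine-nonlinearity} with uniform constants, so Corollary \ref{c:averaging2} applies to every $u_k$ with constants depending only on $M$, $d$ and the parameters of the assumption.

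Denote $A_k := \|u_k\|_{L^1(B_{r_k})}$. On $\{u_k > 0\} = \{u > \ell_k\}$ we have $u_{k-1} = u - \ell_{k-1} \geq \ell_k - \ell_{k-1} = L 2^{-k}$, so Chebyshev gives $|\{u_k > 0\} \cap B_{r_k}| \leq (L 2^{-k})^{-1} A_{k-1}$. Applying Corollary \ref{c:averaging2} to $u_k$ with $R = r_{k-1}$, $r = r_k$ (so $R - r = 2^{-k}$), using $\|u_k\|_{L^1(B_{r_{k-1}})} \leq A_{k-1}$ and choosing $p'$ in the window $(1-\theta)/2 < 1/p' < (1-\theta)/2 + \theta/d$ (nonempty since $\theta/d > 0$), we arrive at
\[ A_k \leq C_0 \, B^k \, L^{-1/p'} \, A_{k-1}^{\beta}, \qquad \beta := \frac{1+\theta}{2} + \frac{1}{p'} > 1, \]
for constants $C_0, B$ depending on $M$, $d$, and the parameters of Assumption \ref{a:genuine-nonlinearity}.

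A standard nonlinear recursion lemma then implies $A_k \to 0$ provided $A_0 \leq c_\ast L^{1/(p'(\beta-1))}$ for a small enough $c_\ast = c_\ast(C_0, B, \beta)$. Since $A_0 \leq \|u\|_{L^1(B_2)}$, it suffices to take
\[ L := C_1 \|u\|_{L^1(B_2)}^{\gamma}, \qquad \gamma := p'(\beta - 1) = 1 - \tfrac12 p'(1 - \theta) > 0, \]
with $C_1$ chosen large enough to saturate the smallness condition. Passing to the limit $k \to \infty$ yields $(u - L)_+ = 0$ a.e.\ in $B_1$, which is the desired bound $\|u\|_{L^\infty(B_1)} \leq C \|u\|_{L^1(B_2)}^{\gamma}$. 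The parameter range where this choice would force $L > M$ is absorbed into the constant $C$ using the a priori bound $\|u\|_\infty \leq M$.

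The essential point, and the only place where the scheme could collapse, is the strict inequality $\beta > 1$ in the recursion: this is precisely the statement highlighted after Corollary \ref{c:averaging2} that the exponents $(1+\theta)/2$ and $1/p'$ can be chosen to sum to strictly more than $1$, which is possible only because averaging yields a local gain of integrability beyond $L^1$ (the slack $\theta/d$). Everything else is bookkeeping: the a priori $L^\infty$ bound confines the translates $\tilde a_k$ to a single compact family satisfying Assumption \ref{a:genuine-nonlinearity} uniformly, and Lemma \ref{l:max-of-subsolutions} guarantees that each truncation is an honest subsolution of the corresponding translated equation, so that the averaging estimate can legitimately be applied at every level of the iteration.
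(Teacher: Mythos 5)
Your proposal is correct and follows essentially the same route as the paper's own proof: the same truncations $(u-\ell_k)_+$ at levels $L(1-2^{-k})$ on the shrinking balls $B_{1+2^{-k}}$, Lemma \ref{l:max-of-subsolutions} to keep each truncation a subsolution of the translated equation, Corollary \ref{c:averaging2} plus Chebyshev to close the superlinear recursion with exponent $\beta=(1+\theta)/2+1/p'>1$, and the same final choice of threshold $L\sim\|u\|_{L^1(B_2)}^{p'(\beta-1)}$. The only differences are cosmetic (indexing, and your explicit window $(1-\theta)/2<1/p'<(1-\theta)/2+\theta/d$, which is a slightly cleaner way of stating the paper's choice of $p'$).
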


\begin{proof}
We want to prove that $\|u\|_{L^\infty(B_1)} \leq U$ for a suitable value of $U$. Let 
\begin{align*}
\ell_k &:= (1-2^{-k}) U, \\
u_k &:= (u-\ell_k)_+, \\
r_k &:= 1+2^{-k}, \\
A_k &:= \int_{B_{r_k}} u_k \dd x
\end{align*}
The objective of the proof is find a value of $U$ such that $A_k \to 0$ as $k \to +\infty$, implying the conclusion of the Theorem. The numbers $A_k$ form a decreasing sequence of nonnegative numbers.

Because of Lemma \ref{l:max-of-subsolutions}, each function $u_k$ is a subsolution of
\[ a(u_k(x) + \ell_k) \cdot \nabla u_k(x) \leq 0.\]
Since the image of $u_k + \ell_k$ is contained in the image of $u$, these equations satisfy Assumption \ref{a:genuine-nonlinearity} uniformly in $k$.

Applying Corollary \ref{c:averaging2} to $u_{k+1}$, with $r=r_k$ and $R=r_{k+1}$, we get
\[ \|u_{k+1}\|_{L^1(B_{r_{k+1}}) } \leq C 2^{\theta k}  \| u_{k+1} \|_{L^1(B_{r_k})}^{(1+\theta)/2} |\{ u_{k+1} > 0\} \cap B_{r_{k+1}} |^{1/p'}.\]

Note that by construction, $u_{k+1} \leq u_k$ and $u_{k+1} > 0$ only where $u_k > 2^{-k-1} U$. Therefore, applying Chebyshev's inequality,
\[ \|u_{k+1}\|_{L^1(B_{r_{k+1}}) } \leq C 2^{\theta k + k/p'} \| u_{k} \|_{L^1(B_{r_k})}^{(1+\theta)/2 + 1/p'} U^{-1/p'}.\]

Observe that choosing $1/p'$ sufficiently close to $(1-\theta)/2 + \theta/d$, the exponent $1+\delta := (1+\theta)/2 + 1/p'$ will be larger than one. We are left with,
\[ \|u_{k+1}\|_{L^1(B_{r_{k+1}}) } \leq C 2^{Ck} \| u_{k} \|_{L^1(B_{r_k})}^{1+\delta} U^{-1/p'}.\]

Then, 
\[ \frac{A_{k+1}}{U^{1/(\delta p')}} \leq C 2^{Ck} \left( \frac{A_{k+1}}{U^{1/(\delta p')}} \right)^{1+\delta}.\]

Thus, the nonincreasing sequence $A_k$ converges to zero provided that $A_0 / U^{1/(\delta p')} \leq c_0$ for some sufficiently small constant $c_0$. Therefore, choosing $U = (c_0^{-1} A_0)^{\delta p'}$, we get $A_k \to 0$ as $k \to \infty$ and we conclude that
\[ \|u\|_{L^\infty(B_1)} \leq U = C \|u\|_{L^1(B_2)}^{\delta p'}.\]
\end{proof}

\begin{remark} \label{r:causality}
For $t$-dependent equations as in \eqref{e-conservationlaw}, there is a predetermined order of causality and the estimate in Theorem \ref{t:degiorgi} takes the slightly more precise form
\[ \|u\|_{L^\infty([1,2] \times B_1)} \leq C \|u\|_{L^1([0,2] \times B_2)}^\gamma,\]
for any solution $u$ of \eqref{e-conservationlaw} in $[0,2] \times B_2$.
\end{remark}

\begin{remark} It may seem strange at first that the constant $C$ in the right hand side of Theorem \ref{t:degiorgi} depends on the upper bound $M$ for $\|u\|_{L^\infty(B_2)}$. Trivially, $\|u\|_{L^\infty(B_1)} \leq \|u\|_{L^\infty(B_2)} \leq M$. In our applications of this lemma, we will set $M$ to be the global $L^\infty$ norm of $u$. We work with bounded entropy solutions. Theorem \ref{t:degiorgi} is useful when $\|u\|_{L^1(B_2)}$ is very small, making the upper bound for $\|u\|_{L^\infty(B_1)}$ smaller than $M$. For example, Theorem \ref{t:degiorgi} implies that if we have an equibounded sequence of entropy solutions of \eqref{e:conservatiolaw-no-t} that converges to a constant in $L^1_{loc}$, then it also converges locally uniformly.

Most estimates in this paper depend on the $L^\infty$ norm of the solution. To start with, the genuine nonlinearity condition given in Assumption \ref{a:genuine-nonlinearity} can only hold (with bounded values of the constant $C$ and $\|a\|_{C^1}$) when $u$ takes values in a bounded interval.
\end{remark}

\section{Continuity outside of the jump set}
\label{s:continuity}

The Haussdorf dimension of the jump set $J$ (defined in \eqref{e:shock-set}) is at most $d-1$. It is proved in \cite{de2003structure} (Theorem 1) that $J$ is codimension-one rectifiable and that for every point $x \notin J$, the function $u$ is $VMO$ at the point $x$ in the sense that
\begin{equation} \label{e:VMO-at-x}
 \lim_{r \to 0} \frac 1 {r^d} \int_{B_r(x)} |u(y) - u_r(x)| \dd y = 0.
\end{equation}
where
\[ u_r(x) = \frac 1 {|B_r|} \int_{B_r(x)} u(y) \dd y.\]

The results in \cite{de2003structure} are based on the analysis of blow-up limits of entropy solutions of \eqref{e:conservatiolaw-no-t}. A blow up sequence is a sequence of rescalings of the solution that \emph{zoom in} near a point. From classical averaging techniques, we know we can always extract a subsequence that converges in $L^1_{loc}$. The set $J$ is defined so that the entropy dissipation measure of any blow-up limit centered outside of $J$ will vanish. Therefore, a Liouville theorem proved in \cite{de2003structure} allows them to show that any blow-up limit at a point $x \notin J$ has to be constant. It is easy to see that this is equivalent to \eqref{e:VMO-at-x}. In this context, establishing the uniqueness of these blowup limits would correspond to $x$ being a Lebesgue point of $u$. Moreover, proving that the blow-up limit converges in $L^\infty_{loc}$ instead of $L^1_{loc}$ would be equivalent to the continuity of $u$ at $x$.

In this section we will combine the information in \eqref{e:VMO-at-x} with Theorem \ref{t:degiorgi} to prove that $u$ is in fact continuous at every point $x \notin J$. Theorem \ref{t:intro-continuity} follows directly from the following proposition.

\begin{prop} \label{p:continuous-out-of-J}
Let $u$ be an entropy solution to the equation \eqref{e:conservatiolaw-no-t} in some domain $\Omega$ and let Assumption \ref{a:genuine-nonlinearity} hold. Let $x \in \Omega \setminus J$. Then the function $u$ is continuous at the point $x$ in the sense that
\[ \lim_{r \to 0} \left( \esssup_{B_r(x)} u - \essinf_{B_r(x)} u \right) = 0.\]
In other words, $\overline u(x) = \underline u(x)$ for every $x \notin J$.
\end{prop}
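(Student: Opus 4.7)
The approach is to combine the VMO information available at points $x\notin J$, namely \eqref{e:VMO-at-x}, with the oscillation estimate from Theorem \ref{t:degiorgi}. The VMO condition asserts that, after subtracting the local mean, the $L^1$ deviation of $u$ on a small ball around $x$ is $o(r^d)$; Theorem \ref{t:degiorgi} promotes this smallness in $L^1$ into smallness in $L^\infty$. The two together will force the essential oscillation of $u$ on $B_r(x)$ to vanish as $r\to 0$, which is exactly the content of the proposition.

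Concretely, I would fix $x\in\Omega\setminus J$ and, for each small $r>0$, set $c_r:=u_{B_{2r}(x)}$ and define the rescaled, centered function $v_r(y):=u(x+ry)-c_r$ on $y\in B_2$. Since \eqref{e:conservatiolaw-no-t} is invariant under spatial dilation (Section \ref{s:scaling}), and replacing $u$ by $u-c_r$ merely replaces the nonlinearity $a(v)$ with the translated $\tilde a(v):=a(v+c_r)$, the function $v_r$ is an entropy solution on $B_2$ of a conservation law of the same form. The shift $c_r$ lies in the fixed compact interval $I$ and $\|v_r\|_{L^\infty(B_2)}\leq 2\|u\|_{L^\infty}$, so $\tilde a$ satisfies Assumption \ref{a:genuine-nonlinearity} with constants that do not depend on $r$. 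Consequently, Theorem \ref{t:degiorgi} applies to the non-negative entropy subsolutions $(v_r)_+$ and $(v_r)_-$ (the latter being non-negative subsolution of the equation with nonlinearity $-\tilde a$, which also satisfies Assumption \ref{a:genuine-nonlinearity}) with a constant $C$ and exponent $\gamma$ uniform in $r$.

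Applying Theorem \ref{t:degiorgi} to each of $(v_r)_\pm$ and adding the resulting estimates gives
\[ \|v_r\|_{L^\infty(B_1)} \leq C\,\|v_r\|_{L^1(B_2)}^{\gamma} = C\left(r^{-d}\int_{B_{2r}(x)}|u(z)-c_r|\dd z\right)^{\gamma}, \]
and the right-hand side tends to $0$ by \eqref{e:VMO-at-x}. Since the oscillation is invariant under constant shifts and under rescaling, this translates into
\[ \esssup_{B_r(x)} u - \essinf_{B_r(x)} u \;=\; \esssup_{B_1} v_r - \essinf_{B_1} v_r \;\leq\; 2\|v_r\|_{L^\infty(B_1)} \;\longrightarrow\; 0, \]
which, by Definition \ref{d:semicontinuous-envelopes}, is precisely the statement $\overline u(x)=\underline u(x)$.

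The only point requiring care is the uniformity of the constant $C$ in Theorem \ref{t:degiorgi} as $r$ and $c_r$ vary. This reduces to the fact that translating the argument of $a$ by any value in the bounded interval $I$ leaves the constants in Assumption \ref{a:genuine-nonlinearity} unchanged (since the condition is a pointwise property of the image of $a$), and that the relevant $L^\infty$ bound for $v_r$ is controlled a priori by $2\|u\|_{L^\infty(\Omega)}$. Once these book-keeping items are settled, the argument is a clean marriage of the VMO input from \cite{de2003structure} with the De Giorgi-type $L^1$-to-$L^\infty$ estimate of Theorem \ref{t:degiorgi}.
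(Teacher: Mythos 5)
Your argument is correct and is essentially identical to the paper's proof: both subtract the local mean, observe via Lemma \ref{l:max-of-subsolutions} that the positive and negative parts are entropy subsolutions of translated conservation laws satisfying Assumption \ref{a:genuine-nonlinearity} uniformly, and then apply the rescaled Theorem \ref{t:degiorgi} together with the VMO property \eqref{e:VMO-at-x} to kill the oscillation. The only cosmetic slip is describing the nonlinearity for $(v_r)_-$ as $-\tilde a$ rather than $v\mapsto \tilde a(-v)=a(c_r-v)$, which does not affect the verification of Assumption \ref{a:genuine-nonlinearity}.
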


\begin{proof}
Let $u_r(x)$ be as in \eqref{e:VMO-at-x} holds. For each fixed value of $x \in \Omega \setminus J$ and $r>0$, let us define $u_1(y) = (u(y)-u_r(x))_+$ and $u_2(y) = (u_r(x) - u(y))_-$. Clearly, we have 
\[ \osc_{B_r(x)} u \leq \esssup_{B_r(x)} u_1 + \esssup_{B_r(x)} u_2.\]

By Lemma \ref{l:max-of-subsolutions}, the function $u_1$ is an entropy subsolution of the equation
\[ a(u_1(y) + u_r(x)) \cdot \nabla_y u_1(y) \leq 0.\]
Moreover, $u_2$ is an entropy subsolution of 
\[ a(u_r(x) - u_2(y)) \cdot \nabla_y u_1(y) \leq 0.\]

 Applying Theorem \ref{t:degiorgi} and scaling, for $i=1,2$, we get
\[
 \esssup_{B_{r/2}(x)} u_i \leq \left(  \frac C {r^d} \int_{B_r(x)} u_i(x) \dd x \right)^\gamma
\leq \left( \frac C {r^d} \int_{B_r(x)} |u(x) - a(r)| \dd x\right)^\gamma \to 0,
\]
which concludes the proof.
\end{proof}

\begin{remark}
As we mentioned in the introduction, the result of Theorem \ref{t:intro-continuity} is new even for $BV$ solutions. However, when the initial data $u_0$ is sufficiently smooth and decaying at infinity, there is an easier proof that solutions of the conservation law equation \eqref{e-conservationlaw} are continuous almost everywhere. We sketch this proof in this remark. We define the sup and inf convolutions of the entropy solution $u$ by the following formulas.
\begin{align}
u^\eps(t,x) = \esssup_{|y-x| < \eps} u(t,y), \\
u_\eps(t,x) = \essinf_{|y-x| < \eps} u(t,y).
\end{align}
Since we assume that the initial data $u(0,x) = u_0(x)$ is smooth and has sufficient decay at infinity, we have
\[ \int_{\R^d} u^\eps(0,x) - u_\eps(0,x) \dd x \lesssim \eps,\]
for every $\eps >0$.

Using Lemma \ref{l:max-of-subsolutions}, it is possible to prove that $u^\eps$ is an entropy subsolution and $u_\eps$ is an entropy supersolution to the equation \eqref{e-conservationlaw}. Therefore, 
\[ \int_{\R^d} u^\eps(t,x) - u_\eps(t,x) \dd x \lesssim \eps,\]
for every $\eps >0$ and $t>0$. Taking $\eps \to 0$ and applying the Monotone convergence theorem, we see that $\overline u(t,x) = \underline u(t,x)$ almost everywhere. Thus, $u$ is continuous almost everywhere.

In the proof we described above, we are taking $\overline u(t,\cdot)$ and $\underline u(t,\cdot)$ to be the upper and lower semicontinuous envelopes of $u(t,\cdot)$ for every fixed $t$. It is possible to prove that they coincide with the corresponding envelopes in space-time using the finite speed of propagation for conservation laws.
\end{remark}

\section{Large time decay in $L^\infty$}
\label{s:decay}

The following result is a direct application of Theorem \ref{t:degiorgi} after scaling.

\begin{lemma} \label{l:decay}
Let $u_0 : \R^d \to \R$ be the initial condition for the equation \eqref{e-conservationlaw} and let Assumption \ref{a:genuine-nonlinearity} hold. Let us assume that $u_0 \in L^\infty \cap L^1(\R^d)$. Then, if $u$ solves \eqref{e-conservationlaw}, we have
\[ |u(t,x)| \leq C t^{-d \gamma} \|u_0\|_{L^1}^{\gamma}.\]
The constants $C$ and $\gamma$ are the same as in Theorem \ref{t:degiorgi} (applied in $\R^{d+1}$).
\end{lemma}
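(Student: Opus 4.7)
The plan is to apply Theorem \ref{t:degiorgi} (in its causal form from Remark \ref{r:causality}, in $\R^{d+1}$) to a suitable space–time rescaling of the nonnegative truncations of $u$, and to control the resulting $L^1$ norm using the Kru\v{z}kov $L^1$ contraction. Fix a point $(t_0,x_0)$ with $t_0 > 0$, set $r := t_0/2$, and consider separately $u_+ := \max(u,0)$ and $u_- := \max(-u,0)$. By Lemma \ref{l:max-of-subsolutions}, $u_+$ is an entropy subsolution of \eqref{e-conservationlaw}; the function $-u$ solves the conservation law with flux $\tilde A(v) := -A(-v)$, whose velocity field $\tilde a(v) = a(-v)$ satisfies Assumption \ref{a:genuine-nonlinearity} with the same constants, so $u_- = \max(-u,0)$ is likewise an entropy subsolution of a conservation law in the required class.

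Next define $v(t,x) := u_\pm(rt,\, x_0 + rx)$. A direct computation shows that spatial-and-temporal dilation by $r$ preserves \eqref{e-conservationlaw} (no change of $a$ is needed), so $v$ remains an entropy subsolution of a conservation law satisfying Assumption \ref{a:genuine-nonlinearity} with uniform constants. Moreover $\|v\|_{L^\infty} \leq \|u_0\|_{L^\infty}$, so the constant $M$ in Theorem \ref{t:degiorgi} is under uniform control. Applying the causal form of the theorem gives
\[
\|v\|_{L^\infty([1,2]\times B_1)} \leq C\, \|v\|_{L^1([0,2]\times B_2)}^\gamma.
\]
Changing variables $s=rt$, $y=x_0+rx$ and invoking $L^1$ contraction, $\|u(s,\cdot)\|_{L^1} \leq \|u_0\|_{L^1}$ for all $s>0$, we estimate
\[
\|v\|_{L^1([0,2]\times B_2)} = r^{-(d+1)}\int_0^{2r}\!\!\int_{B_{2r}(x_0)} u_\pm(s,y)\,\dd y\,\dd s \;\leq\; r^{-(d+1)}\cdot 2r\cdot \|u_0\|_{L^1} \;=\; 2\,r^{-d}\|u_0\|_{L^1}.
\]
Since $v(2,0) = u_\pm(t_0,x_0)$ and $(2,0)\in[1,2]\times B_1$, combining the two displays and substituting $r = t_0/2$ yields $|u_\pm(t_0,x_0)| \leq C\, t_0^{-d\gamma}\|u_0\|_{L^1}^\gamma$ almost everywhere, which gives the claimed decay.

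The only potentially subtle point is verifying that both signs can be handled within the framework of Theorem \ref{t:degiorgi}; this is resolved by the observation that the reflected flux $\tilde A(v) = -A(-v)$ produces an equation in the same class as the original, allowing us to treat $u_-$ symmetrically to $u_+$. Everything else is a straightforward scaling computation, keeping track of the single power of $r$ coming from integration in the time variable (which is what produces the $-d\gamma$ exponent rather than $-(d+1)\gamma$).
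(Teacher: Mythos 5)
Your proof is correct and follows essentially the same route as the paper's: rescale space--time by $t_0$ so the solution lives on a unit-size domain, apply the De Giorgi estimate of Theorem \ref{t:degiorgi} in $\R^{d+1}$ to the positive and negative truncations (subsolutions by Lemma \ref{l:max-of-subsolutions}), and bound the resulting $L^1$ norm via $L^1$ contraction, the time integral contributing only one power of $t_0$ and hence the exponent $-d\gamma$. The only cosmetic differences are that the paper applies the theorem on a ball in $\R^{d+1}$ centered at $(t_0,x_0)$ of radius $t_0$ rather than the cylinder of Remark \ref{r:causality}, and leaves the reflection argument for the negative part implicit where you spell it out.
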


\begin{proof}
Note that from the usual maximum principle and $L^1$ contraction principle, we have that for all $t > 0$, $\|u(t,\cdot)\|_{L^1(\R^d)} \leq \|u_0\|_{L^1(\R^d)}$ and $\|u\|_{L^\infty((0,\infty) \times \R^d)} \leq \|u_0\|_{L^\infty(\R^d)}$.

For any fixed $t >0$ and $x \in \R^d$, we define the function $w(y)$, for $y \in B_2 \subset \R^{d+1}$, by the formula,
\[ w(y) = \max(u(t + ty_0/2, x_1+ ty_1/2, \cdots, x_d + t y_d/2),0). \]
We apply Theorem \ref{t:degiorgi} to $w$. By Lemma \ref{l:max-of-subsolutions}, the function $w$ is an entropy subsolution of
\[ (1,a(w))  \cdot \nabla w \leq 0 \qquad \text{ in } B_2 \subset \R^{d+1}.\]
Note that here $B_2$ is the ball in $\R^{d+1}$. According to Theorem \ref{t:degiorgi},
\[ u(t,x) \leq w(0) \leq C \|w\|_{L^1(B_2)}^{\gamma} \leq C \left( \frac 1 {t^{d+1}} \int_0^{2t} \int u_+(s,x) \dd x \dd s \right)^{\gamma} \leq C t^{-d\gamma} \|u_0\|_{L^1}^{\gamma}.\]
Similar reasoning shows that $u(t,x) \geq -C t^{-d\gamma} \|u_0\|_{L^1}^{\gamma}$.
\end{proof}

Now we refine the decay in $\|u(t,\cdot)\|_{L^\infty}$ using the two-parameter scaling defined in section \ref{s:scaling}.

Theorem \ref{t:intro-decay} is simply a restating of the following theorem. Note that here $(t,x) \in \R^{d+1}$ and for any value of $m$, $\det S_\lambda \approx \lambda^q$ where $q = j_1 + \dots + j_{d+1}$ is the sum of the indexes in the construction of $S_\lambda$. Since for $t$-dependent equations, $a(0) = (1,\dots)$, we will have $j_1 = 0$. The other indexes could be any number in that range, therefore $q \leq m (m-1) \dots (m-d+1) = d (2m-d+1)/2$.

\begin{thm} \label{t:decay}
Let Assumption \ref{a:hormander} hold. Let $\gamma_0$ be the positive number such that
\[ \det S_\lambda = C \lambda^{1/\gamma_0-1}.\]

Let us assume that $u_0 \in L^\infty \cap L^1(\R^d)$. Then, if $u: [0,\infty) \times \R^d \to \R$ is the solution to the equation \eqref{e-conservationlaw} with initial value $u_0$, we have, for any $\gamma \in (0,\gamma_0)$,
\begin{equation} \label{e:decay}
 |u(t,x)| \leq C t^{-d \gamma} \|u_0\|_{L^1}^{\gamma},
\end{equation}
for a constant $C$ depending on $\|u_0\|_{L^\infty}$, $\gamma$ and the constants in Assumption \ref{a:genuine-nonlinearity}. 

In particular, for the generalized Burgers equation we have
\[ \gamma_0 = \left( 1 + \frac{d (d+1)}2 \right)^{-1},\]
and for any equation satisfying Assumption \ref{a:hormander},
\[ \gamma_0 \geq \left( 1 + \frac{   d (2m-d+1)  }2 \right)^{-1},\]
\end{thm}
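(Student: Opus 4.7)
The plan is to combine Theorem \ref{t:degiorgi} with the two-parameter scaling of Section \ref{s:scaling} to derive a recursive inequality for $M(t) := \|u(t,\cdot)\|_{L^\infty(\R^d)}$, and then iterate this inequality, starting from the preliminary decay furnished by Lemma \ref{l:decay}, to drive the time exponent up toward the scale-invariant value $d\gamma_0$.

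For $x_0 \in \R^d$ and $t$ large enough that $\lambda := M(t/2) < v_0$, the maximum principle gives $M(s) \leq \lambda$ for every $s \in [t/2, t]$. Introduce the rescaled function
\[
w(s, y) := \lambda^{-1} u\!\left(\tfrac{t}{2} + \tfrac{t}{4} s,\; x_0 + \tfrac{t}{4}\, S_\lambda y\right), \qquad (s, y) \in [0, 2] \times B_2 \subset \R^{d+1}.
\]
By Section \ref{s:scaling}, $w$ is an entropy solution of a conservation law satisfying Assumption \ref{a:genuine-nonlinearity} with constants independent of $\lambda$, and $|w| \leq 1$ throughout its domain. Applying the causality form of Theorem \ref{t:degiorgi} (Remark \ref{r:causality}) to $w_+$ and $w_-$ via Lemma \ref{l:max-of-subsolutions}, combined with a change of variables whose Jacobian contributes $(\det S_\lambda)^{-1} = C\lambda^{1 - 1/\gamma_0}$ and with the $L^1$-contraction $\|u(s,\cdot)\|_{L^1} \leq \|u_0\|_{L^1}$, yields the key recursive inequality
\[
M(t) \leq C\, M(t/2)^{\,1 - \gamma_1/\gamma_0}\, t^{-d\gamma_1}\, \|u_0\|_{L^1}^{\gamma_1},
\]
where $\gamma_1$ is the exponent supplied by Theorem \ref{t:degiorgi} applied in $\R^{d+1}$.

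Now I iterate. Plugging the scaling-consistent ansatz $M(t) \leq K \|u_0\|_{L^1}^{a}\, t^{-da}$ into the recursion produces a bound of the same form with updated exponent $a' = a(1 - \gamma_1/\gamma_0) + \gamma_1$ and updated constant $K'$ satisfying $K' \leq C_\# K^{\,1-\gamma_1/\gamma_0}$. Both recursions are contractions: the exponent map has $a = \gamma_0$ as unique fixed point with ratio $1 - \gamma_1/\gamma_0 < 1$, and the constants remain uniformly bounded. Lemma \ref{l:decay} supplies the base case $a_0 = \gamma_1$; iterating, $a_n \nearrow \gamma_0$, so for any prescribed $\gamma < \gamma_0$, after finitely many steps $a_n > \gamma$, which yields \eqref{e:decay} after absorbing the excess powers of $t$ using $t \geq t^*$. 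The explicit bounds on $\gamma_0$ come from the count $q = j_1 + \cdots + j_{d+1}$ in $\det S_\lambda = C\lambda^q$ with $j_1 = 0$: for the generalized Burgers equation the indices are exactly $0, 1, \ldots, d$, giving $q = d(d+1)/2$, while the largest admissible choice of indices in $\{0, 1, \ldots, m\}$ gives the general bound $q \leq d(2m - d + 1)/2$.

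The main obstacle is keeping the iteration legitimate at every step: the scaling construction requires $\lambda < v_0$ so that the rescaled equation inherits uniform constants in Assumption \ref{a:genuine-nonlinearity}, and this uniformity is precisely what allows us to reuse the same $\gamma_1$ and the same constant in Theorem \ref{t:degiorgi} at each iteration. The smallness $\lambda = M(t/2) < v_0$ is guaranteed once $t$ exceeds an initial time $t^* = t^*(\|u_0\|_{L^1}, \|u_0\|_{L^\infty})$ provided by Lemma \ref{l:decay}; in the regime $t \leq t^*$ the trivial bound $|u| \leq \|u_0\|_{L^\infty}$ is absorbed into the final constant, which is thereby allowed to depend on $\|u_0\|_{L^\infty}$ as stated.
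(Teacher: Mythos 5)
Your proposal is correct and follows essentially the same route as the paper: rescale by $\lambda = \|u(t/2,\cdot)\|_{L^\infty}$ using the two-parameter scaling $S_\lambda$, gain the factor $(\det S_\lambda)^{-1} = C\lambda^{1-1/\gamma_0}$ in the $L^1$ norm, apply the De Giorgi estimate, and iterate the exponent map to its fixed point $\gamma_0$, handling the regime $\lambda \geq v_0$ trivially. The only cosmetic difference is your exponent recursion $a \mapsto a(1-\gamma_1/\gamma_0)+\gamma_1$ (from reusing the base exponent $\gamma_1$ at each step) versus the paper's $\gamma \mapsto 2\gamma - \gamma^2/\gamma_0$ (from re-applying the currently improved decay); both converge to $\gamma_0$ and yield the stated result.
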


\begin{proof}
We start with the estimate from Lemma \ref{l:decay} and improve the exponent $\gamma$ iteratively using scaling.

Indeed, assume that we know that \eqref{e:decay} holds for some $\gamma>0$. Evaluating this at $t/2$, we get
\[ \|u(t/2,\cdot) \|_{L^\infty} \leq C t^{-d \gamma} \|u_0\|_{L^1}^{\gamma} \]
Set $\lambda = C t^{-d \gamma} \|u_0\|_{L^1}^{\gamma}$. We apply the scaling transformation $S_\lambda$ described in \eqref{e:two-parameter-scaling}. Note that for $t$-dependent equations, $S_\lambda$ does not affect the variable $t$. We abuse notation by writing $S_\lambda$ as a linear transformation from $\R^d \to \R^d$ (as opposed to $\R^{d+1} \to \R^{d+1}$ fixing the first component). The function $\tilde u$ given by
\[ \tilde u(s,x) = \lambda^{-1} u(t/2 + s, S_\lambda x)\]
satisfies a conservation law equation satisfying Assumption \ref{a:genuine-nonlinearity} with uniform constant provided that $\lambda < v_0$ (defined in section \ref{s:scaling}). Moreover, $\|\tilde u\|_{L^\infty([0,\infty) \times \R^d)} \leq 1$.

If $\lambda \geq v_0$, then that means that $t^{-d} \|u_0\|_{L^1} \geq c_0$ for some constant $c_0$, and there is nothing to prove. So, we can safely assume $\lambda < v_0$.

We apply Lemma \ref{l:decay} again to the function $\tilde u$ after time $t/2$. We obtain
\[ |\tilde u(t/2,x) | \leq C t^{-d \gamma} \|\tilde u(0,\dots)\|_{L^1}^{\gamma} = C t^{-d\gamma} \left( \lambda^{-1} \det S_\lambda ^{-1} \|u_0\|_{L^1}\right)^\gamma = C \left( t^{-d} \|u_0\|_{L^1} \right)^{\gamma - \gamma^2/\gamma_0 }.\]
Then,
\[ |u(t,x) | \leq \lambda |\tilde u(t/2,x)| = C \left( t^{-d} \|u_0\|_{L^1} \right)^{2\gamma - \gamma^2/\gamma_0 }.\]

Therefore, we improve the exponent from $\gamma$ to $2\gamma - \gamma^2 / \gamma_0$. Iterating this procedure, $\gamma$ will approximate arbitrarily the fixed point of the map, $\gamma_0$.
\end{proof}

\begin{remark} \label{r:optimal-decay}
The exponent $\gamma_0$ is optimal at least when $d=1$. For the one dimensional problem $u_t + u^m u_x = 0$ (which corresponds to Burgers equation when $m=1$), we obtain $\gamma_0=(m+1)^{-1}$. Indeed, it is achieved by the example
\[ u(t,x) = \begin{cases}
\left( \frac x {t+1} \right)^{1/m} & \text{ for } x \in [0,(t+1)^{-m/(m+1)}], \\
0 & \text{otherwise}.
\end{cases}\]
In this case $\max_x u(t,x) = (t+1)^{-1/(m+1)}$ for any $t>0$.

When $d>1$, the optimality of the exponent $\gamma_0$ is currently unclear.
\end{remark}

\begin{remark} \label{r:decay-with-Linfty-norm}
In many cases, we can compute how the constant $C$ in Theorem \ref{t:intro-decay} depends on $\|u_0\|_{L^\infty}$. This is possible whenever the scaling $S_\lambda$ described in section \ref{s:scaling} can be applied in the full range of the parameter $\lambda \in (0,+\infty)$.

Let us consider for example the $d$-dimensional generalized Burgers equation,
\[ u_t + (u,u^2,\dots,u^d) \cdot \nabla u = 0.\]
In this case $S_\lambda(t,x_1,\dots,x_d) = (t,\lambda x_1, \lambda^2 x_2, \dots, \lambda^d x_d)$. The function $u_{r,\lambda}(t,x) = \lambda^{-1} u(S_\lambda(rt,rx))$ solves the same equation as $u$.

According to Theorem \ref{t:intro-decay}, we have that for some constant depending on $\|u_0\|_{L^\infty}$, 
\[ \|u(t,\cdot)\|_{L^\infty(\R^d)} \leq C \|u_0\|_{L^1}^\gamma t^{-d\gamma}.\]
We will pick $\lambda = \|u_0\|_{L^\infty}$ so that the transformed solution $u_{1,\lambda}$ has initial $L^\infty$ norm equal to one. Thus, we get
\begin{align*}
\|u_{1,\lambda}(0,\cdot)\|_{L^\infty(\R^d)} &= 1 ,\\
\|u_{1,\lambda}(0,\cdot)\|_{L^1(\R^d)} &= \lambda^{-1} \|u_0\|_{L^1} / \det(S_\lambda) = \|u_0\|_{L^\infty}^{-1-d(d+1)/2} \|u_0\|_{L^1}, \\
\|u(t,\cdot)\|_{L^\infty} &= \lambda \|u_{1,\lambda}(t,\cdot)\|_{L^\infty} \leq C_1 \|u_0\|_{L^\infty} \left(\|u_0\|_{L^\infty}^{-1-d(d+1)/2} \|u_0\|_{L^1} \right)^\gamma t^{-d \gamma}.
\end{align*}

Thus, we deduce that
\[ |u(t,x)| \leq C_1 \| u_0 \|_{L^\infty}^{1 - \gamma \left( 1 + d (d+1) /2 \right)} \|u_0\|_{L^1}^\gamma t^{-d \gamma},\]
where $C_1$ depends on dimension only.

Note that, according to Theorem \ref{t:intro-decay}, $\gamma < \gamma_0 = \left( 1 + d (d+1) /2 \right)^{-1}$.
\end{remark}

\section{Characteristics curves}
\label{s:characteristics}

In this section we will construct certain Lipschitz curves along which an entropy solution $u$ attains certain constant value. It is convenient to consider $t$-dependent equations as in \eqref{e-conservationlaw}, since the parameter $t$ will also serve as the parametrization of these curves.

There are other similar, but not equivalent, definitions of generalized characteristics. See for example \cite{dafermos1977generalized}.



We will use a precise version of the local maximum principle which we give in the following lemma.


\begin{lemma} \label{l:local-max-pple}
Let $u:[0,T] \times \Omega \to \R$ be an entropy subsolution of \eqref{e-conservationlaw}. Assume that $u$ takes values in some interval $I$ and $K$ is the convex envelope of $\overline{\{a(v) : v \in I\}}$. Then, for $t> \tau >0$ and $x\in\Omega$,
\[ \overline u(t,x) \leq \max_{x - \tau K} \overline u(t-\tau,\cdot),\]
provided that $x - \tau K \subset \Omega$. Also, if $u$ is a supersolution of \eqref{e-conservationlaw}, 
\[ \underline u(t,x) \geq \min_{x - \tau K} \underline u(t-\tau,\cdot).\]
In particular, if $u$ is an entropy solution, then for every value $v_0 \in [\underline u(t,x), \overline u(t,x)]$, there exists a $y \in x - \tau K$ such that $v_0 \in [\underline u(t-\tau,y), \overline u(t-\tau,y)]$
\end{lemma}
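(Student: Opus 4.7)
The plan is to use the entropy inequality with the Kruzhkov entropy $\eta(v) = (v - M)_+$ against a test function $\varphi$ that is essentially the indicator of the backward cone of dependence from $(t, x)$ with slope set $K$, and then to let $M$ decrease to $\max_{x - \tau K} \overline u(t - \tau, \cdot)$. The supersolution statement is identical with $\eta(v) = (M - v)_+$, and the final claim about $v_0$ reduces to a connectedness argument.

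The test function is built from the $1$-Lipschitz distance $\rho(s, y) := \dist(y,\, x - (t - s) K)$. If $h(\xi) = \max_{k \in K} k \cdot \xi$ is the support function of $K$, then at any differentiability point outside the cone one has $\nabla_y \rho = \eta^*$ and $\partial_s \rho = h(-\eta^*)$ for the unit outward normal $\eta^*$ to the nearest face of $x - (t-s)K$; since $a(v) \in K$ gives $a(v) \cdot \eta^* \geq -h(-\eta^*)$, we obtain the key pointwise inequality
\[
 \partial_s \rho + a(v) \cdot \nabla_y \rho \geq 0 \qquad \text{for every } v \in I,
\]
which is preserved by convolution with a nonnegative mollifier. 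Setting $\varphi(s, y) := \beta_\eps(\rho_\mu(s, y)) \, \psi(s)$, where $\rho_\mu$ is a spacetime mollification of $\rho$, $\beta_\eps$ is a smooth nonincreasing cutoff supported in $[0, \eps]$ with $\beta_\eps(0) = 1$, and $\psi$ is a tent function in time supported in $[t - \tau, t]$, one finds $\partial_s \varphi + a(v) \cdot \nabla_y \varphi \leq \beta_\eps(\rho_\mu) \, \psi'(s)$ for every $v \in I$.

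Fix $M > \max_{x - \tau K} \overline u(t - \tau, \cdot)$; upper semicontinuity produces $\delta > 0$ such that $u(t - \tau, \cdot) \leq M$ a.e. on a $\delta$-neighborhood of $x - \tau K$. Writing $A(u) - A(M) = \bar a(u)(u - M)$ with $\bar a(u) := \int_0^1 a(M + r(u - M)) \dd r \in K$ by convexity of $K$, the entropy inequality reads
\[
 \int (u - M)_+ \bigl[\partial_s \varphi + \bar a(u) \cdot \nabla_y \varphi\bigr] \dd y \dd s \geq 0,
\]
and the bound from the previous paragraph yields $\int (u - M)_+ \, \beta_\eps(\rho_\mu) \, \psi'(s) \dd y \dd s \geq 0$. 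Choosing $\psi$ as a tent with plateau width going to zero and using $u \in C([0, T]; L^1_{loc})$ passes to an inequality of boundary traces at $s = t - \tau$ and $s = t$; for $\eps$ and $\mu$ small enough the trace at $s = t - \tau$ vanishes by choice of $\delta$, so the trace at $s = t$ vanishes too, giving $u(t, \cdot) \leq M$ in a neighborhood of $x$ and thus $\overline u(t, x) \leq M$. Infimizing over admissible $M$ concludes the subsolution estimate.

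For the final assertion, the set $\{y \in x - \tau K : v_0 \in [\underline u(t - \tau, y), \overline u(t - \tau, y)]\}$ is closed in the connected convex set $x - \tau K$; if it were empty, the decomposition of $x - \tau K$ into the opens $\{\overline u(t-\tau, \cdot) < v_0\}$ and $\{\underline u(t-\tau, \cdot) > v_0\}$ would force one of them to be all of $x - \tau K$, contradicting either $v_0 \leq \overline u(t, x)$ or $v_0 \geq \underline u(t, x)$ by the subsolution/supersolution estimates already obtained. The main obstacle I expect is the mollification of $\rho$: its gradient is discontinuous across the boundary of the cone, so one must verify that convolving with a positive kernel preserves the crucial pointwise inequality on $\partial_s \rho + a(v) \cdot \nabla_y \rho$, which works because that inequality holds almost everywhere and convolution commutes with the constant-coefficient operator $\partial_s + a(v) \cdot \nabla_y$.
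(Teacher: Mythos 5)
Your proof is correct and is essentially the paper's own argument: both test the entropy inequality for $(u-M)_+$ against a cutoff of the distance to the backward cone $x-(t-s)K$, use the projection characterization of $\nabla\,\dist(\cdot,K)$ and convexity of $K$ to give the transport term a sign, conclude by monotonicity of the resulting time integral, and finish the last claim with the same connectedness argument. The only differences are cosmetic (a mollified distance plus a tent function in time, versus the paper's self-similar cutoff $\varphi_\delta((x_0-x)/(t_0+\delta-t))$ and direct differentiation of $I(t)$); note that the paper's small forward shift of the cone's apex to time $t_0+\delta$ is exactly the device that resolves the mollification-near-the-apex issue you flag at the end.
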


\begin{proof}
Let us fix a point $(t_0,x_0)$ and $\tau \in (0,t_0)$. Let \[M := \max_{x_0 - \tau K} \overline u(t_0-\tau,\cdot) = \lim_{\delta \to 0} \esssup \{ u(t_0-\tau,y) : \dist(y,x_0-\tau K) < \delta\}.\]

Let $\eps>0$ be arbitrary, and let us pick $\delta>0$ so that $u(t_0-\tau,y) \leq M+\eps$ whenever $\dist(y,x_0-\tau K) < 2\delta$.

Let $\varphi_\delta$ be the following function
\[ \varphi_\delta(x) = \begin{cases}
1 & \text{if } x \in K, \\
0 & \text{if } \dist(x,K) > \delta, \\
1 - \delta^{-1} \dist(x,K) &\text{otherwise.}
\end{cases}
\]
Consider the quantity
\[ I(t) := \int_{\R^d} \varphi_\delta \left( \frac {x_0-x} {t_0+\delta-t} \right) (u(t,x) - M - \eps)_+ \dd x.\]
From the definition of $M$, $\eps$ and $\delta$, we know that $I(t_0 - \tau) = 0$.

Since $u$ is an entropy subsolution, for $t \in [t_0-\tau, t_0+\delta]$,
\begin{align*} 
 I'(t) &\leq \int_{\R^d} \frac{x_0-x}{(t_0+\delta-t)^2} \cdot \nabla \varphi_\delta \left( \frac{x_0- x}{(t_0+\delta-t)} \right) (u(t,x) - M-\eps)_+ \\
& \phantom{=} - \frac{1}{(t_0+\delta-t)}  \nabla \varphi_\delta \left( \frac{x_0 - x}{(t_0+\delta-t)} \right)  \cdot (A(u(t,x)) - A(M+\eps)) \one_{u>M+\eps} \dd x.
\end{align*}

Therefore
\[ I'(t) \leq \frac 1 {t_0+\delta-t} \int_{\R^d} \max_{V \in K} \left\{ \left( \frac{x_0-x}{t_0+\delta-t} - V \right) \cdot \nabla \varphi_\delta\left( \frac{x_0-x}{t_0+\delta-t} \right)  \right\} (u-M -\eps)_+ \dd x.\]

Let $y := (x_0-x) / (t_0 +\delta -t)$. We observe that $\nabla \varphi(y) = \delta^{-1} (z-y)/|z-y|$ whenever $0 < \dist(y,K) < \delta$ and $z \in K$ is the point in $K$ such that $\dist(y,K) = |y-z|$. Moreover, the maximum in the integrand is achieved at $V = z$. Thus,
\[ \max_{V \in K} \left\{ \left( \frac{x}{t_0+\delta-t} - V \right) \cdot \nabla \varphi_\delta\left( \frac{x}{t_0+\delta-t} \right)  \right\} = (y-z) \cdot \nabla \varphi_\delta(y) = -|y-z|.\]
Therefore, $I'(t) \leq 0$. Since $I(t)$ is always nonnegative, we have $I(t) = 0$ for all $t \in [t_0 -\tau,t_0 +\delta]$, which means that $u(t,x) \leq M + \eps$ almost everywhere in a neighborhood of $x_0 - (t_0 + \delta - t) K$ for every $t \in [t_0-\tau,t_0 + \delta]$.

 This implies that $\overline u(t,x) \leq M + \eps$ whenever $t \in [0,t_0+\delta)$ and $x$ belongs to  $x_0 - (t_0+\delta-t) K$. In particular $\overline u(t_0,x_0) \leq M$. Since $\eps>0$ is arbitrarily small, we get that $\underline u(t_0,x_0) \leq M$.

The other inequality follows similarly.

Finally, notice that the sets $\{ y \in x_0 - \tau K : \underline u(t_0-\tau,y) \leq v_0\}$ and $\{ y \in x_0 - \tau K : \overline u(t_0-\tau,y) \geq v_0\}$ are closed, nonempty, and their union is the full convex set $x_0 - \tau K$. By connectedness, there must be some point in their intersection.
\end{proof} 


\begin{prop} \label{p:characteristics}
Let $u : [0,T] \times \Omega \to \R$ be an entropy solution of \eqref{e-conservationlaw}. Let $\bar u$ and $\underline u$ be its upper and lower semicontinuous envelopes. For any point $x_0 \in \Omega$ and $v_0 \in [\underline u(T,x_0), \overline u(T,x_0)]$ there exists a Lipschitz curve $\gamma: [0,T] \to \R^n$ such that
\begin{align*}
\gamma(T) &= x_0, \\
\gamma'(t) &\in \hull{ a([\underline u(t,\gamma(t)), \bar u(t,\gamma(t))]) }, \text{wherever $\gamma'$ exists,}\\
v_0 &\in [\underline u(t,\gamma(t)), \overline u(t,\gamma(t))], \qquad \text{for all } t \in [0,T].
\end{align*}
\end{prop}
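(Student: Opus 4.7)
The plan is to construct $\gamma$ as a uniform limit of piecewise linear curves produced by a backwards Euler iteration based on Lemma \ref{l:local-max-pple}, and then verify each property in turn, with the differential inclusion being the delicate step.

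\textbf{Construction.} Fix a large integer $N$, set $\tau := T/N$ and $t_k := T-k\tau$, and let $K := \hull{\overline{a(I)}}$, which is compact. Starting from $y_0 := x_0$ and the assumption $v_0 \in [\underline u(T,x_0), \overline u(T,x_0)]$, Lemma \ref{l:local-max-pple} applied iteratively yields a sequence $y_1,\ldots,y_N$ with $y_{k+1} \in y_k - \tau K$ and $v_0 \in [\underline u(t_{k+1}, y_{k+1}), \overline u(t_{k+1}, y_{k+1})]$ for every $k$. The piecewise linear interpolation $\gamma_N:[0,T]\to\R^d$ through these points is Lipschitz with constant $L := \sup_{V \in K}|V|$, uniformly in $N$, and satisfies $\gamma_N(T)=x_0$. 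By Arzel\`a--Ascoli a subsequence converges uniformly to some Lipschitz $\gamma$ with $\gamma(T)=x_0$.

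\textbf{Endpoint and inclusion.} For any $t \in [0,T]$, choose grid indices $k_N$ along the convergent subsequence with $t_{k_N}\to t$; equicontinuity of the approximants gives $y_{k_N}^{(N)} = \gamma_N(t_{k_N}) \to \gamma(t)$. Since $\underline u(t_{k_N},y_{k_N}^{(N)}) \leq v_0 \leq \overline u(t_{k_N},y_{k_N}^{(N)})$ by construction, upper semicontinuity of $\overline u$ and lower semicontinuity of $\underline u$ give $v_0 \in [\underline u(t,\gamma(t)),\overline u(t,\gamma(t))]$.

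\textbf{The differential inclusion.} The main obstacle is that a priori the discrete velocities of $\gamma_N$ only lie in $K$, whereas we must show $\gamma'(t) \in F(t,\gamma(t)) := \hull{a([\underline u(t,\gamma(t)), \overline u(t,\gamma(t))])}$, which can be strictly smaller. The plan is to rerun Lemma \ref{l:local-max-pple} with a refined velocity set. Because $\overline u$ is USC, $\underline u$ is LSC and $a$ is continuous, for any $\eps>0$ there is a spacetime neighborhood $U$ of $(t,\gamma(t))$ on which $u$ takes values in $I_\eps := [\underline u(t,\gamma(t))-\eps, \overline u(t,\gamma(t))+\eps]$ almost everywhere, and on which the set-valued map $F$ is contained in an $\eps$-neighborhood of $F(t,\gamma(t))$. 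Inspecting the proof of Lemma \ref{l:local-max-pple}, the set $K$ enters only through two ingredients: the choice of the distance-like cutoff $\varphi_\delta$, which may be built around any compact convex set, and the identity $A(u)-A(M+\eps) = (u-M-\eps)\,\bar a$ with $\bar a \in \hull{a([M+\eps,u])}$. Restricting attention to the region where $u \in I_\eps$ and noting that $M+\eps \geq \underline u(t,\gamma(t))$ shows $\bar a \in K_\eps := \hull{a(I_\eps)}$, so the local max and min principles remain valid with $K$ replaced by $K_\eps$, provided $\tau$ is small enough that the relevant backwards cones stay inside $U$. For $N$ large, every iteration step near time $t$ therefore satisfies $y_k - y_{k+1} \in \tau K_\eps$, so the forward velocity of $\gamma_N$ lies in the convex set $K_\eps$ throughout a time-neighborhood of $t$. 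Passing to the uniform limit, $(\gamma(t+h)-\gamma(t))/h \in K_\eps$ for small $h$, so $\gamma'(t) \in K_\eps$ at every differentiability point; letting $\eps\to 0$ and using continuity of $a$ yields $\gamma'(t) \in F(t,\gamma(t))$, completing the proof.
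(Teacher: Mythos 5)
Your construction is the same as the paper's: a backwards Euler scheme driven by Lemma \ref{l:local-max-pple}, uniform Lipschitz bounds, Arzel\`a--Ascoli, and semicontinuity of $\overline u$, $\underline u$ to propagate the constraint $v_0 \in [\underline u,\overline u]$ along the limit curve. The endpoint and value-constraint verifications are fine.

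The one step that does not follow as written is the passage ``the local max and min principles remain valid with $K$ replaced by $K_\eps$ \dots\ therefore $y_k - y_{k+1} \in \tau K_\eps$.'' The refined lemma only guarantees the \emph{existence} of an admissible next point inside the smaller cone $y_k - \tau K_\eps$; it says nothing about the point you actually selected in the first paragraph, which was only required to lie in $y_k - \tau K$ with $K = \hull{\overline{a(I)}}$ and may well sit outside $y_k - \tau K_\eps$. ``Rerunning'' the lemma after the fact either leaves the already-chosen points unchanged (in which case the inclusion is unjustified) or changes the construction (in which case the limit curve, and hence the neighborhood $U$ of $(t,\gamma(t))$ you used to define $I_\eps$, changes with it -- a circularity, since $U$ is defined in terms of the limit you are trying to build). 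The fix is to build the localization into the construction from the start, intrinsically in the discrete data: at each step choose $y_{k+1}$ so that $y_k - y_{k+1} \in \tau\, \hull{\overline{a(u(D_k))}}$, where $D_k$ is the small spacetime box (of diameter $O(1/N)$) containing the backwards cone of that step. This is exactly what the paper does with its sets $D_j = [t_{j-1},t_j]\times B_{M/k}(x_j)$. With that modification your a posteriori argument works: for $N$ large along the convergent subsequence, $D_k \subset U$ for the steps near time $t$, so the step velocities lie in $K_\eps = \hull{a(I_\eps)}$, the difference quotients of $\gamma$ lie in the closed convex set $K_\eps$, and letting $\eps \to 0$ (using compactness and Carath\'eodory to identify $\bigcap_\eps \hull{a(I_\eps)}$ with $\hull{a([\underline u(t,\gamma(t)),\overline u(t,\gamma(t))])}$) gives the differential inclusion. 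Your reading of where $K$ enters the proof of Lemma \ref{l:local-max-pple} is correct and justifies the localized version of that lemma.
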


\begin{proof}
Without loss of generality, we do the proof for $T=1$.

Since $u$ is a bounded function, let us define $M = \sup |a(I)|$, where $I$ is a closed interval which contains the range of $u$. The Lipschitz curve $\gamma$ that we construct will have a Lipschitz constant less or equal to $M$.

We will construct a sequence of approximations of $\gamma$, which we call $\gamma_k$. Each $\gamma_k$ is a polygonal, with vertexes at the point $t = j T/k$ with $j = 0,1,\dots,k$.

We start by describing the construction of $\gamma_k$, for each $k= 1,2, \dots$.

We define $\gamma_k(1) = x_0$ for any value of $k$. 

We will determine the values of $\gamma_k(j/k)$ iteratively for $j = k, k-1, k-2, \dots, 0$ (in that order). 

For each fixed value of $k$, let us call $x_j = \gamma(j/k)$, $t_j = j/k$. 

Let us supposed that we have established a value of $x_j$ so that $|x_j - x_{j+1}| < M/k$ so that $v_0 \in [\underline u(t_j,x_j), \overline u(t_j,x_j)]$. We find $x_{j-1}$ applying Lemma \ref{l:local-max-pple} (note that this choice may not be unique). We have that $x_j - x_{j-1} \in \frac 1 k \hull{a(I)}$. In particular $|x_j-x_{j-1}| \leq M/k$, so the polygonal curve $\gamma_k$ is Lipschitz with constant $M$. Applying Lemma \ref{l:local-max-pple} again if necessary, we can ensure that 
\[ x_j - x_{j-1} \in \frac 1 k \hull{\overline{a(u(D_j))}},\] where $D_j := [t_{j-1} , t_j] \times B_{M/r}(x_j)$.

Let $d_j := k(x_j - x_{j-1}) \in \hull{\overline{a(u(D_j))}}$.

The polygonals $\gamma_k$ are uniformly Lipschitz. Therefore, there is a uniformly convergent subsequence. We abuse notation by still calling this subsequence $\gamma_k$. Let $\gamma$ be its uniform limit. Since $\gamma$ is Lipschitz, it is differentiable almost everywhere and for any $0 \leq \bar t_1 < \bar t_2 \leq T$,
\begin{align*} 
 \gamma(\bar t_1) - \gamma(\bar t_2) &= \lim_{k \to \infty} \gamma_k([kt_2]/k) - \gamma_k([kt_1]/k) , \\ 
 &= \lim_{k \to \infty} \sum_{j \in [k \bar t_1]}^{[k \bar t_2]} \frac 1 k d_j,\\
&= \lim_{k \to \infty} \frac{[k \bar t_2] - [k \bar t_1]}k V_k = (t_2 - t_1) V.
\end{align*}
Here, each vector $V_k$ is in the convex envelope of the union of the sets $a(\overline{u(D_j)})$ for $j=0,\dots,k$. Then $V_k \in \hull{ a([R_k, S_k]) }$, where
\begin{align*} 
 R_k &:= \essinf \{ u(t,y) : t \in [\bar t_1, \bar t_2], |y-\gamma_k(t)| < 2M/k\}, \\
 S_k &:= \esssup \{ u(t,y) : t \in [\bar t_1, \bar t_2], |y-\gamma_k(t)| < 2M/k\}.
\end{align*} 
After taking the limit $k \to \infty$, we get that $V = \lim_{k \to \infty} V_k$ satisfies $V \in \hull{ \overline { a([R_\infty, S_\infty]) }}$, where
\[ R_\infty := \min \{ \underline u(t,x)) : t \in [\bar t_1, \bar t_2], x = \gamma(t)\} \qquad \text{and} \qquad  S_\infty := \max \{  \overline u(t,x)) : t \in [\bar t_1, \bar t_2], x = \gamma(t) \}.\] 

Therefore, at any point $t$ where $\gamma$ is differentiable, we must have that $\gamma'(t)$ belongs to the convex envelope of $a([\underline u(t,\gamma(t)), \overline u(t,\gamma(t))])$.
\end{proof}

\section{Lack of entropy dissipation away from the jump set}
\label{s:no-entropy}

The result in this section says that when $u$ is continuous in an open domain $\Omega$, there is no entropy dissipation there. Combining Lemma \ref{l:continuous-solutions} with Theorem \ref{t:intro-continuity}, we deduce Theorem \ref{t:intro-no-entropy}.

In \cite{dafermos2006}, C. Dafermos proves for one dimensional problems that continuous solutions do not dissipate entropy. In this section we generalize that result to multidimensional conservation laws following a similar approach. The main idea is that for continuous solutions the characteristic curves are well defined. We can prove that they are straight lines, they extend backwards and forward throughout the domain of the equation, and they do not cross.

We start with a preparatory lemma about the characterisitic curves of a continuous solution.

\begin{lemma} \label{l:characterisitcs-two-sided}
Let $\Omega \subset \R^d$ be open and $u : \Omega \to \R$ be a continuous function which solves \eqref{e:conservatiolaw-no-t}. For any $x_0 \in \R^d$, the function $u$ is constant along the segment $\{x_0 + t a(u(x_0)): t \in (-r,s)\}$, where $r,s > 0$ are chosen so that the segment lies inside the domain $\Omega$.
\end{lemma}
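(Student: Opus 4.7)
The plan is to lift the stationary equation to the time-dependent formulation and invoke Proposition \ref{p:characteristics}. Define $U : \R \times \Omega \to \R$ by $U(t,x) := u(x)$; then $U$ is a continuous entropy solution of $U_t + \dv_x A(U) = 0$, with $\overline U = \underline U = U$ everywhere. Set $v_0 := u(x_0)$. Applying Proposition \ref{p:characteristics} to $U$ at $(T, x_0)$ for $T>0$, the only admissible value is $v_0$ and the proposition produces a Lipschitz curve $\gamma : [0,T] \to \R^d$ with $\gamma(T) = x_0$, $u(\gamma(\tau)) = v_0$, and $\gamma'(\tau) \in \hull{\{a(v_0)\}} = \{a(v_0)\}$ almost everywhere. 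Integrating gives $\gamma(\tau) = x_0 + (\tau-T)a(v_0)$, so that $u(x_0 - \sigma a(v_0)) = v_0$ for $\sigma \in [0,T]$ as long as the segment remains in $\Omega$. Taking $T$ as large as $r$ covers the backward half of the claim.

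For the forward half, Proposition \ref{p:characteristics} does not apply directly since entropy admissibility is not time-reversible. I would run a connectedness and non-crossing argument. Let $\Sigma := \{s' \in [0,s) : u(x_0 + \sigma a(v_0)) = v_0 \text{ for all } \sigma \in [0,s']\}$, which is closed in $[0,s)$ by continuity of $u$ and contains $0$. Assume for contradiction $s_\star := \sup \Sigma < s$; then $s_\star \in \Sigma$, and there exists a sequence $s_n \downarrow s_\star$ with $v_n := u(x_0 + s_n a(v_0)) \neq v_0$ and $v_n \to v_0$. For each $n$, applying Proposition \ref{p:characteristics} in the lift at $(T+s_n, x_0 + s_n a(v_0))$ gives a backward characteristic along which $u \equiv v_n$; likewise, at $(T+s_\star, x_\star)$ with $x_\star := x_0 + s_\star a(v_0)$ we obtain one along which $u \equiv v_0$. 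A direct computation in $\R^{d+1}$ shows that these two lines intersect inside their common region of validity precisely when $a(v_n)$ is a positive scalar multiple of $a(v_0)$; in that case the two lines coincide, which would force $v_n = v_0$ (by local injectivity of $a$ near $v_0$, a consequence of Assumption \ref{a:genuine-nonlinearity} combined with the equivalent Assumption \ref{a:hormander}), yielding the contradiction.

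The main obstacle is to close this forward step in dimension $d \geq 2$ when $a(v_n)$ and $a(v_0)$ are nearly but not exactly parallel, so that the two backward characteristics in $\R^{d+1}$ fail to meet in the lift within the region of validity. The resolution is a quantitative use of the genuine nonlinearity of $a$ together with the fact that $v_n$ may be taken arbitrarily close to $v_0$: iterating the backward-characteristic construction along intermediate base points $x_0 + \sigma a(v_0)$ for $\sigma \in (s_\star, s_n)$ should propagate the identity $u \equiv v_0$ step by step along the segment, eventually collapsing the two families of characteristics and contradicting $v_n \neq v_0$.
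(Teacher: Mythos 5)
Your backward half is correct and is exactly the paper's argument: lift to the $t$-dependent equation by setting $U(t,x)=u(x)$, apply Proposition \ref{p:characteristics}, use continuity to collapse $[\underline u,\overline u]$ to a singleton so that the backward characteristic is the straight line with velocity $a(v_0)$, and conclude $u(x_0-\sigma a(v_0))=v_0$.

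The forward half has a genuine gap, which you yourself flag but do not close. The crossing argument fails in dimension $d\geq 2$: the backward characteristic through $(T+s_n,\,x_0+s_n a(v_0))$ travels with velocity $a(v_n)$ and the one through $(T+s_\star,\,x_\star)$ with velocity $a(v_0)$; two such lines in $\R^{d+1}$ generically do not intersect at all when $a(v_n)\neq a(v_0)$, so no contradiction with single-valuedness of $u$ is available. The proposed fix (``iterating along intermediate base points should propagate the identity'') is not an argument --- each intermediate point suffers from exactly the same non-intersection problem, and nothing quantitative about genuine nonlinearity is actually used. (As a side remark, Assumption \ref{a:genuine-nonlinearity} also does not give local injectivity of $a$; it controls the sublevel sets of $v\mapsto a(v)\cdot\xi$, not of $v\mapsto a(v)-a(v_0)$.) The paper closes the forward step by a completely different, constructive device: for $0<t\ll r$ with $B_r(x_0)\subset\Omega$, every $x\in B_r(x_0)$ satisfies $u(x-ta(u(x)))=u(x)$ by the backward result, and the continuous map $H(x)=x-ta(u(x))$, being a small perturbation of the identity on $\partial B_r(x_0)$, has topological degree one around $x_0$; hence there exists $x$ with $H(x)=x_0$, so $u(x)=u(x_0)$ and $x=x_0+ta(u(x_0))$, extending the characteristic forward. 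You need this (or some equivalent surjectivity argument) to replace your second and third paragraphs; as written, the forward direction is not proved.
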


The previous lemma would be applied for a maximal interval $(-r,s)$ so that $\{x_0 + t a(u(x_0)): t \in (-r,s)\}$ is the connected component of $\{x_0 + t a(u(x_0)): t \in \R\} \cap \Omega$ that contains $x_0$.

\begin{proof}
We extend the function $u$ as $u(t,x)$ by making it constant in $t$. Thus, the new function (which we still call $u$) is an entropy solution of \eqref{e-conservationlaw}.

Proposition \ref{p:characteristics} tells us about the existence of backward characteristic curves. In this case, since the function $u$ is continuous, $[\underline u(t,\gamma(t)), \overline u(t,\gamma(t))]$ is a singleton for every value of $t$. The function $u$ and $\gamma'$ will be constant on $\gamma$. Thus, $\gamma$ will be a straight line.

Applying Proposition \ref{p:characteristics}, we see that $u(x_0) = u(0,x_0) = u(t,x_0 + t a(u(x_0)))$ for $t<0$, provided that the segment $\{x_0 + \tau a(u(x_0)): \tau \in (t,0)\} \subset \Omega$.

We need to prove that $u(x_0) = u(t,x_0 + t a(u(x_0)))$ also holds for positive values of $t$. While Proposition \ref{p:characteristics} gives us backward characteristics, it does not give us forward characterisitcs curves.

Let $r>0$ so that $B_r(x_0) \subset \Omega$ and $0 < t \ll r$. Using Proposition \ref{p:characteristics}, we have that for every $x \in B_r(x)$, $u(x - t a(u(x))) = u(x)$. The map $H : x \mapsto x  - t a(u(x))$ is continuous. This map $H$ maps the sphere $\partial B_r(x_0)$ onto some set surrounding $x_0$. 
Since $t \ll r$, this map $H$ has topological degree one around $x_0$. Thus, there is some $x \in B_r$ such that $x - t a(u(x)) = x_0$. Since $u$ is constant on the backward characteristic curve finishing at $x$, we have $u(x) = u(x_0)$ and $x = x_0 + t a(u(x_0))$.

The previous argument shows that we can extend the characteristic curve through $x_0$ forward for a small interval of time. We iterate this procedure until this segment hits $\partial \Omega$.
\end{proof}

\begin{lemma} \label{l:continuous-solutions}
Let $\Omega \subset \R^d$ be an open set and $u : \Omega \to \R$ be a continuous entropy solution to the equation \eqref{e:conservatiolaw-no-t}. Then, there is no entropy dissipation and the equality holds in \eqref{e-entropycondition}.
\end{lemma}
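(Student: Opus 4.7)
The strategy is to reduce, via a partition of unity, to a local statement near an arbitrary point $x_0 \in \Omega$, and then to approximate $u$ locally by smooth classical solutions $u_\epsilon$ for which the claim is a trivial chain-rule computation. By linearity it suffices to show that for every $C^1$ function $\eta$ (not only convex ones) and every $\varphi \in C_c^1$ supported in a small neighborhood of $x_0$, we have $\int q(u)\cdot\nabla\varphi\,\dd x = 0$; this then gives equality in \eqref{e-entropycondition}.

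The key structural input is Lemma \ref{l:characterisitcs-two-sided}: $u$ is constant along each characteristic line, and by uniqueness of the direction $a(u(z))$ at any point $z$, two characteristics meeting at $z$ must coincide, so the characteristics through $\Omega$ form a partition. Fix $x_0$ and assume (after rotating coordinates, and handling the measure-zero degenerate case $a(u(x_0))=0$ separately) that $a(u(x_0)) = c\, e_d$ with $c>0$. Setting $\Sigma_0 = \{x_d = (x_0)_d\}$ and $\tilde u(y') := u(y',(x_0)_d)$, the map $\Phi(y',t) = (y',(x_0)_d) + t\,a(\tilde u(y'))$ is continuous, injective on a small box, and (by invariance of domain) a homeomorphism onto a neighborhood $V \ni x_0$, with $u = \tilde u \circ \pi$ where $\pi = \Phi^{-1}$ projects onto the $y'$ coordinate.

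Now I would mollify: let $\tilde u_\epsilon = \tilde u * \rho_\epsilon$, smooth and uniformly convergent to $\tilde u$, and define $u_\epsilon$ on the image of $\Phi_\epsilon(y',t) := (y',(x_0)_d) + t\,a(\tilde u_\epsilon(y'))$ by $u_\epsilon(\Phi_\epsilon(y',t)) := \tilde u_\epsilon(y')$. By construction $u_\epsilon$ is constant on the lines $t\mapsto \Phi_\epsilon(y',t)$ whose direction is precisely $a(u_\epsilon)$, so $u_\epsilon$ is a smooth classical solution wherever $\Phi_\epsilon$ is a diffeomorphism. For such classical solutions the direct chain rule gives $\dv q(u_\epsilon) = \eta'(u_\epsilon)\,a(u_\epsilon)\cdot \nabla u_\epsilon = 0$, hence $\int q(u_\epsilon)\cdot\nabla\varphi\,\dd x = 0$ exactly. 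Since $\tilde u_\epsilon \to \tilde u$ uniformly, one checks $u_\epsilon \to u$ uniformly on any fixed compact $K\subset V$, and by continuity of $q$ one passes to the limit to conclude $\int q(u)\cdot\nabla\varphi\,\dd x = 0$ for all $\varphi \in C_c^1(K)$.

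The main obstacle is the diffeomorphism property of $\Phi_\epsilon$ on a fixed neighborhood uniformly in $\epsilon$: the derivatives of $\tilde u_\epsilon$ can blow up like $\epsilon^{-1}$, which a priori could make the Jacobian of $\Phi_\epsilon$ degenerate arbitrarily close to $\Sigma_0$ and spoil the construction of $u_\epsilon$. The rescue is that the global non-crossing of characteristics of $u$ (forced by continuity) is a monotonicity-type constraint on $\tilde u$ in the transverse direction which, being pointwise and convex-combination-preserving, is inherited by the mollification $\tilde u_\epsilon$. Making this precise via a compactness/contradiction argument (if $\Phi_\epsilon$ failed injectivity at points converging into $V$, one would extract in the limit two distinct characteristics of $\tilde u$ crossing in $V$, contradicting Lemma \ref{l:characterisitcs-two-sided}) is the crux of the technical work.
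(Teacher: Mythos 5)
There is a genuine gap, and it sits exactly where you locate ``the crux of the technical work'': the uniform-in-$\eps$ diffeomorphism property of $\Phi_\eps$. Your compactness argument only excludes crossings of the mollified characteristics at pairs of points that remain separated as $\eps \to 0$; it says nothing about pairs that collapse to a single point in the limit, and those produce no contradiction with the non-crossing of the characteristics of $u$. But collapsing pairs are precisely the degeneration of the Jacobian: $D\Phi_\eps(y',t)$ is a perturbation of the nondegenerate matrix $[e_1,\dots,e_{d-1},a(\tilde u_\eps)]$ by a term of size $|t|\,\|a'\|_{L^\infty}\,|\nabla'\tilde u_\eps|$, and $|\nabla'\tilde u_\eps|$ is only bounded by $\omega(\eps)/\eps$ with $\omega$ the modulus of continuity of $u$. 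So $\Phi_\eps$ is guaranteed to be a local diffeomorphism only on the slab $|t| \lesssim \eps/\omega(\eps)$, whose thickness tends to zero whenever $u$ is not Lipschitz --- and mere continuity is all you have. This is the standard shock-formation mechanism: smoothing continuous data makes its characteristics focus at scales shrinking with $\eps$, so $u_\eps$ is a classical solution only on a vanishing neighborhood of $\Sigma_0$ and you cannot pass to the limit on a fixed compact $K$. The proposed rescue via ``monotonicity inherited by mollification'' does not obviously repair this: the non-focusing constraint forced by Lemma \ref{l:characterisitcs-two-sided} is a condition on the map $y' \mapsto a(\tilde u(y'))$, not on $\tilde u$, and mollification does not commute with composition by the nonlinear $a$ (the discrepancy between $a(\tilde u \ast \rho_\eps)$ and $(a\circ\tilde u)\ast\rho_\eps$ is of order $\omega(\eps)^2$, which again protects only characteristics starting sufficiently far apart); moreover for $d \geq 2$ the transversal is $(d-1)$-dimensional and there is no scalar one-sided Lipschitz quantity to preserve. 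A secondary gap: the degenerate set $\{x : a(u(x)) = 0\}$ need not have measure zero (take $u$ locally constant with a value in $a^{-1}(0)$), and even if it did, you would still owe the identity for test functions supported near it.

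For contrast, the paper sidesteps all of this by arguing at the kinetic level rather than by foliating physical space. For each fixed velocity $v$ it shows that $f(\cdot,v)$ is constant along every line of direction $a(v)$ --- the fixed vector $a(v)$, not the solution-dependent $a(u)$: if $t \mapsto f(x+ta(v),v)$ changed value at some $t_0$, continuity of $u$ would force $u(x+t_0 a(v)) = v$, and then Lemma \ref{l:characterisitcs-two-sided} makes that very line a characteristic on which $u \equiv v$, hence $f(\cdot,v)\equiv 0$ there, a contradiction. This yields $a(v)\cdot\nabla_x f = 0$ for every $v$, hence $\partial_v \mu = 0$, hence $\mu = 0$ since $\mu \geq 0$ is supported in a bounded range of $v$; no mollification, transversality, or treatment of degenerate directions is needed. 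If you want to keep a physical-space approximation argument you would need a regularization that exactly preserves the non-focusing structure of the characteristics, which is a substantial extra construction; the kinetic route is both shorter and complete.
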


\begin{proof}
Accoding to Lemma \ref{l:characterisitcs-two-sided}, the function $u$ is constant along straight lines with slope $a(u)$.

Let $f$ be the function in \eqref{e:kinetic-formulation}. Note that because $u$ is continuous, the function $f(x,v)$ is well defined at every point $(x,v) \in \Omega \times \R$. We claim $a(v) \cdot \nabla_x f = 0$. This follows from the fact that $f(x+ta(v),v)$ is constant in $t$ for every $x \in \Omega$ and $v \in \R$. Suppose otherwise that for some $x \in \Omega$ and $v \in \R$, the function $t \mapsto f(x+t a(v),v)$ changes values at some $t=t_0$. This necessarily implies that $u(x+t_0 a(v)) = v$ from the definition of $f$ since $u$ is continuous. Then, according to Proposition \ref{p:characteristics}, $u$ must be constant along the same straight line $t \mapsto x + t a(v)$. Thus, $f$ can never change values on that segment.
\end{proof}

\section{Traces and blowup limits}
\label{s:traces}

We conclude this article with a section explaining some consequences of our results in the context of the structure theorems given in \cite{de2003structure}. In that article, the authors prove that $J$ is a rectifiable set. They study blow up limits of the function $u$, continuing some ideas from \cite{vasseur2001}. They prove that $H^{d-1}$ almost everywhere in $J$, the blow up limits are single shocks (a solution consisting of two constants separated by a hyperplane).

Here, the blow up limit at a point $x_0$ is given by
\begin{equation} \label{e:blowup}
 u_\infty(x) = \lim_{r \to 0} u(rx+x_0).
\end{equation}
The limit takes place in $L^1_{loc}$.

We provide the following refinement in the result below. We characterize the two constants in the blow up limit as $\overline u(x)$ and $\underline u(x)$. We prove the blow up limit is uniform away from the interface. Moreover, we show that $u$ has classical nontangential limits $\overline u(x)$ and $\underline u(x)$ at those points in $J$.

\begin{prop} \label{p:traces}
Let us consider a function $a: \R \to \R^d$ for which Assumption \ref{a:genuine-nonlinearity} holds. Let $u$ be an entropy solution to \eqref{e:conservatiolaw-no-t} and $x_0$ be a point in the jump set $J$ so that the blow up limit $u_\infty$ from \eqref{e:blowup} is a single shock. That means that there exists a unit vector $n$, and $u^+ > u^-$, such that
\[  u_\infty(x) = \lim_{r \to 0} u(rx+x_0) = \begin{cases}
u^+ &\text{when } x \cdot n > 0, \\
u^-  &\text{when } x \cdot n < 0.
\end{cases}\]
Then, $u^+ = \overline u(x_0)$ and $u^- = \underline u(x_0)$. The limit is uniform in the set $B_1 \cap \{|x \cdot n|>\delta\}$, for any $\delta>0$. Moreover, for any $\delta>0$,
\[ \lim_{\substack{y \to x_0\\  (y-x_0) \cdot n > \delta |y-x_0| }} u(y) = \overline u(x_0) \qquad \text{and} \qquad \lim_{\substack{y \to x_0\\  (y-x_0) \cdot n < -\delta |y-x_0| }} u(y) = \underline u(x_0).\]
\end{prop}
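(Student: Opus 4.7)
The plan is to apply Theorem \ref{t:degiorgi} twice to carefully chosen one-sided truncations of the rescalings $u_r(x) := u(x_0+rx)$. Each $u_r$ is an entropy solution of \eqref{e:conservatiolaw-no-t} with the same $L^\infty$ bound as $u$, and by hypothesis $u_r \to u_\infty$ in $L^1_{loc}(\R^d)$. As in the proof of Proposition \ref{p:continuous-out-of-J}, for any constant $c$ the truncations $(u_r - c)_+$ and $(c - u_r)_+$ are both nonnegative entropy subsolutions of shifted equations satisfying Assumption \ref{a:genuine-nonlinearity} with uniform constants, a consequence of Lemma \ref{l:max-of-subsolutions}.

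For the identification $\overline u(x_0)=u^+$, I would fix $\eta > 0$ and set $w_r := (u_r - u^+ - \eta)_+$. Since $u_\infty$ takes only the values $u^\pm \leq u^+ < u^+ + \eta$, the limit $(u_\infty - u^+ - \eta)_+$ vanishes identically, so $w_r \to 0$ in $L^1(B_2)$. Theorem \ref{t:degiorgi} then yields $\|w_r\|_{L^\infty(B_1)} \to 0$, which translated back reads $\esssup_{B_r(x_0)} u \leq u^+ + \eta$ for all sufficiently small $r$; letting $r \to 0$ and then $\eta \to 0$ gives $\overline u(x_0) \leq u^+$. The opposite inequality $\overline u(x_0) \geq u^+$ will come from the uniform convergence established below. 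The identity $\underline u(x_0) = u^-$ is obtained analogously, applying Theorem \ref{t:degiorgi} to $(u^- - \eta - u_r)_+$, which likewise tends to $0$ in $L^1(B_2)$ since $u_\infty \geq u^-$.

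For the uniform convergence away from the interface and the non-tangential limits, I fix $R, \delta > 0$. For any $\bar x \in \overline{B_R} \cap \{x\cdot n \geq \delta\}$ I pick $\rho = \delta/3$ so that $B_{2\rho}(\bar x) \subset \{x\cdot n > 0\}$; on this ball $u_\infty$ is identically $u^+$, so both $(u_r-u^+)_+$ and $(u^+-u_r)_+$ tend to $0$ in $L^1(B_{2\rho}(\bar x))$. Applying Theorem \ref{t:degiorgi} (translated and rescaled from $B_2 \to B_{2\rho}(\bar x)$) to each subsolution gives $\esssup_{B_\rho(\bar x)} |u_r - u^+| \to 0$. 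A finite subcover of the compact set $\overline{B_R} \cap \{x\cdot n \geq \delta\}$ by such half-radius balls then yields uniform convergence $u_r \to u^+$ on $\overline{B_R} \cap \{x \cdot n \geq \delta\}$. Specializing to any $\bar x$ with $\bar x \cdot n > 0$ supplies the missing bound $\overline u(x_0) \geq u^+$, completing the identification. For the non-tangential limit, write $y = x_0 + r \omega$ with $r = |y-x_0|$ and $\omega \in \{|\omega|=1,\ \omega\cdot n \geq \delta\}$, a compact subset of $\overline{B_R} \cap \{x \cdot n \geq \delta\}$ for any $R \geq 1$; uniform convergence then gives $u(y) = u_r(\omega) \to u^+$ as $y \to x_0$ within the cone. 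The analogous argument on $\{x \cdot n \leq -\delta\}$ handles the $u^-$ side of both claims.

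The main obstacle is conceptual rather than computational: one must recognize that Theorem \ref{t:degiorgi}, although cast as a pointwise bound by an $L^1$ norm, is precisely the tool that promotes $L^1_{loc}$ convergence of entropy subsolutions to locally uniform convergence, once the limit is understood and one restricts to a region where only a single phase of $u_\infty$ is present. No finer analysis of the shock interface itself is required.
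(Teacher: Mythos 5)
Your proof is correct, and it differs from the paper's precisely at the one delicate point, the identification $u^+=\overline u(x_0)$ (symmetrically $u^-=\underline u(x_0)$). The paper takes the inequality $u^+\le\overline u(x_0)$ as the easy one (it follows from $u\le\overline u$ a.e.\ and upper semicontinuity) and spends its effort on $u^+\ge\overline u(x_0)$, which it obtains by running Theorem \ref{t:degiorgi} ``in reverse'': since $\esssup_{B_1}(u_r-\overline u(x_0)+\delta)_+\ge\delta$, the estimate forces the superlevel set $\{u>\overline u(x_0)-\delta\}$ to fill a definite fraction of $B_{2r}(x_0)$, and this density survives the $L^1$ blow-up limit. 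You instead prove $\overline u(x_0)\le u^+$ directly and in the ``forward'' direction: $(u_r-u^+-\eta)_+$ tends to zero in $L^1(B_2)$ because the blow-up limit never exceeds $u^+$, and Theorem \ref{t:degiorgi} upgrades this to $\esssup_{B_r(x_0)}u\le u^++2\eta$ (in fact $\eta=0$ already works); you then recover the complementary inequality $\overline u(x_0)\ge u^+$ from the locally uniform convergence on $\{x\cdot n\ge\delta\}$ rather than from semicontinuity, and there is no circularity since that uniform convergence uses only the hypothesis $u_r\to u_\infty$ in $L^1_{loc}$. Both routes rest entirely on Theorem \ref{t:degiorgi} applied to one-sided truncations of the rescalings; yours is the more economical and matches the spirit of the paper's proof of Proposition \ref{p:continuous-out-of-J}, while the paper's intermediate density bound $|\{u>\overline u(x_0)-\delta\}\cap B_r|\ge c\,r^d$ is a slightly stronger statement valid at any point without reference to the blow-up. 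The covering argument for uniform convergence off the interface and the deduction of the non-tangential limits coincide with the paper's; the only cosmetic caveat is that the last limits should be read through $\overline u(y)$ and $\underline u(y)$, which your uniform $\esssup$ bounds do control after shrinking the cone aperture from $\delta$ to, say, $\delta/2$.
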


\begin{proof}
Since $\underline u \leq u \leq \overline u$ almost everywhere, from the semicontinuity of $\underline u$ and $\overline u$, it is clear that $\underline u(x_0) \leq u^- \leq u^+ \leq \overline u(x_0)$. We have to prove the opposite inequalities.

For any $\delta>0$ and $r>0$, let us consider the function $w(x) = (u(rx+x_0) - \overline u(x_0) + \delta)_+$. From Lemma \ref{l:max-of-subsolutions}, we see that $w$ is a subsolution of the equation
\[ a(w(x)+\overline u(x_0)-\delta) \cdot \nabla w(x) \leq 0.\]
Naturally, the function $a(\cdot + \overline u(x_0)-\delta)$ also satisfies Assumption \ref{a:genuine-nonlinearity}. Noticing that $\overline w(0) = \delta$, we apply Theorem \ref{t:degiorgi} and obtain that
\[ \delta \leq \esssup_{B_1} w \leq C \left( \int_{B_2} w \dd x \right)^\gamma \leq C \delta^\gamma |\{ w > 0\} \cap B_2|^\gamma = C \delta^\gamma \left( r^{-d} |\{ u > \overline u(x_0) - \delta\} \cap B_{2r}| \right)^\gamma .\]
Therefore, there must be some constant $c>0$, depending on $\delta$, so that for all $r>0$ small,
\[ |\{ u > \overline u(x_0) - \delta \} \cap B_r | \geq c r^d.\]
Equivalently,
\[ | \{x \in B_1 : u(rx+x_0) > \overline u(x_0) - \delta \}| \geq c.\]
Therefore, passing to the limit in $L^1$ as $r \to 0$, we recover that
\[ | \{ x \in B_1 : u_\infty(x) \geq \overline u(x_0) - \delta \}| \geq c.\]
This allow us to conclude that $u^+ \geq \overline u(x_0)$. Similarly (but upside down), we prove that $u^- \leq \underline u(x_0)$. This finishes the proof of the first statement.

From Theorem \ref{t:degiorgi}, we know that if a sequence of solutions to a conservation law satisfying Assumption \ref{a:genuine-nonlinearity} converges to a constant in $L^1$, then it also converges to a constant uniformly (except perhaps for a set of measure zero). That proves the second statement.

We are left with the third statement about nontangential limits. Let $y_j \to x_0$ so that $(y_j - x_0) \cdot n > \delta |y_j - x_0|$. We want to prove that $\overline u(y_j)$ and $\underline u(y_j)$ both converge to $\overline u(x_0)$.

Let $r_j = |y_j-x_0|$, and $y_j = x_0 + r_j z_j$ with $|z_j|=1$ and $z_j \cdot n > c$. Since $u(x_0 + rx) \to u_\infty$ locally uniformly away from $x \cdot n = 0$ and $r_j \to 0$, we conclude that $\lim_{j \to \infty} \underline u(y_j) = \lim_{j \to \infty} \underline u(x_0 + r_j z_j)  = u^+ = \overline u(x_0)$.

\end{proof}

\bibliographystyle{plain}
\bibliography{continuity}

\end{document}